\theoremstyle{plain}
\newtheorem{thm}{Theorem}
\newtheorem{claim}[thm]{Claim}
\newtheorem{lem}[thm]{Lemma}
\theoremstyle{definition} 
\theoremstyle{remark} \newtheorem{rem}{Remark}
\newcommand{\mymid}{\,:\,}
\newcommand{\bigmid}{\,\middle\vert\,}
\newcommand{\Fill}{\operatorname{fill}}
\newcommand{\Flat}{\operatorname{flat}}
\newcommand{\Fold}{\operatorname{fold}}
\title{Vertex isoperimetry and independent set stability\\ for tensor
powers of cliques}
\author{Joshua Brakensiek}
\thanks{Department of Mathematical Sciences, Carnegie Mellon
  University, Pittsburgh, PA 15213, USA. Email: {\tt jbrakens at
    andrew.cmu.edu}. This work was partially supported by REU
  supplements to NSF CCF-1526092 and CCF-1422045}
\begin{document}

\begin{abstract}

  The tensor power of the clique on $t$ vertices (denoted by $K_t^n$)
  is the graph on vertex set $\{1, \hdots, t\}^n$ such that two
  vertices $x, y \in \{1, \hdots, t\}^n$ are connected if and only if
  $x_i \neq y_i$ for all $i \in \{1, \hdots, n\}$. Let the density of
  a subset $S$ of $K_t^n$ to be $\mu(S) := \frac{|S|}{t^n}$, and let
  the vertex boundary of a set $S$ to be
  vertices which are incident to some vertex of $S$, perhaps including
  points of $S$.  We investigate two similar problems on such graphs.

  First, we study the vertex isoperimetry problem. Given a density
  $\nu \in [0, 1]$ what is the smallest possible density of the vertex
  boundary of a subset of $K_t^n$ of density $\nu$? Let $\Phi_t(\nu)$
  be the infimum of these minimum densities as $n \to \infty$.  We find
  a recursive relation allows one to compute $\Phi_t(\nu)$ in time
  polynomial to the number of desired bits of precision.

  Second, we study given an independent set $I \subseteq K_t^n$ of
  density $\mu(I) = \frac{1}{t}(1-\epsilon)$, how close it is to a
  maximum-sized independent set $J$ of density $\frac{1}{t}$. We show
  that this deviation (measured by $\mu(I \setminus J)$) is at most
  $4\epsilon^{\frac{\log t}{\log t - \log(t-1)}}$ as long as $\epsilon
  < 1 - \frac{3}{t} + \frac{2}{t^2}$. This substantially improves on
  results of Alon, Dinur, Friedgut, and Sudakov (2004) and
  Ghandehari and Hatami (2008) which had an $O(\epsilon)$ upper
  bound. We also show the exponent $\frac{\log t}{\log t - \log(t-1)}$
  is optimal assuming $n$ tending to infinity and $\epsilon$ tending
  to $0$. The methods have similarity
  to recent work by Ellis, Keller, and Lifshitz (2016) in the context
  of Kneser graphs and other settings.

  The author hopes that these results have potential applications in
  hardness of approximation, particularly in approximate graph
  coloring and independent set problems. 
\end{abstract}

\maketitle

\section{Introduction}\label{sec:intro}

\subsection{Vertex isoperimetry}\label{subsec:vertex-iso}
For any undirected graph $G = (V_G, E_G)$ and $S \subseteq V_G$, we
define the \emph{vertex boundary} of $S$ to be \[
\partial S := \{x \in V_G \mymid \text{exists $y \in S$ such that $\{x, y\}
\in E_G$}\}.
\]
Furthermore, we define the \emph{density} of $S$ to be
\[
\mu(S) := \frac{|S|}{|V_G|}.
\]
The relationship between $\mu(S)$ and $\mu(\partial S)$, particularly
when $\mu(S)$ is sufficiently small (typically at most $1/2$). Is
known as a \emph{vertex isoperimetric inequality}. Such relationships
are captured by the \emph{isoperimetric parameter} (or
\emph{isoperimetric profile}) of a graph
\[
\Phi(G, \nu) = \inf\{\mu(\partial S) \mymid \mu(S)\ge \nu\}
\]
Proving such inequalities for various graphs is a frequent topic in
the literature (e.g., \cite{Bobkov2000, Christofides2013}). Typically
such works focus on a \emph{linear} or near-linear relationship
between $\mu(\partial S)$ and $\mu(S)$, known as the
\emph{isoperimetric constant}.
\begin{align}
h(G) &= \inf\left\{\frac{\mu(\partial S)}{\mu(S)} \bigmid S \subset
  V_G, \mu(S) \in (0, 1/2] \right\}.
\end{align}
In this paper, we study graphs for which there is an
order-of-magnitude difference between $\mu(S)$ and $\mu(\partial S)$,
when $\mu(S)$ is sufficiently small. For example, if $\mu(\partial S)
\ge \sqrt{\mu(\partial S)}$ for all $S$, we would like to say that $G$
expands by a power of $2$. Such `hyper-expansion' can be captured by
what we coin as the \emph{isoperimetric exponent}. For all $\epsilon >
0$ consider.
\begin{align}
\eta(G, \epsilon) &= \inf\left\{\frac{\log \mu(S)}{\log \mu(\partial S)} 
  \bigmid S \subset V_G, \mu(\partial S) \in (0, \epsilon)\right\}
\end{align}
where $\log$ is the natural logarithm. In other words, for every
subset $S$ of $G$ of density $\delta$, the boundary of $S$ has density
at least $\delta^{1/\eta(G, \epsilon)}.$ The larger the parameter
$\eta(G)$ is, the more `expansive' the graph is. It is easy to see
that $\eta(G, \epsilon)$ is in general a decreasing function of
$\epsilon$. As we often work with large subsets of our graph, we let
$\eta(G) := \eta(G, 1)$.

In this paper, we study the isoperimetric profile of the tensor powers
of cliques. For undirected graphs $G = (V_G, E_G), H = (V_H, E_H)$, we
define the \emph{tensor product} $G \otimes H$ to be the undirected
graph on vertex set $V_1 \times V_2$ such that an edge connects $(u_1,
v_1)$ and $(u_2, v_2)$ if and only if $\{u_1, u_2\} \in E_G$, and
$\{v_1, v_2\} \in E_H.$ Note that up to isomorphism, the tensor
product is both commutative and associative. We then denote $\otimes^n
G$ to be the tensor product of $n$ copies of $G$. Since this is the
only graph product discussed in this article, we shorten this to
$G^n$. In this article, we focus on the case that $G = K_t$, where
$K_t$ is the complete graph on $t \ge 3$ vertices. It turns out for
such graphs that for all $\epsilon > \frac{1}{t^n}$, $\eta(G) =
\eta(G, \epsilon)$.

In particular, we shall compute the following.
\begin{thm}\label{thm:isoperimetry-tensor}
For all $t \ge 3$ and all positive integers $n$,
\begin{align}
\eta(K_t^n) = \eta(K_t) = \frac{\log t}{\log t - \log (t-1)} = t \log t + \Theta(\log t).
\end{align}
\end{thm}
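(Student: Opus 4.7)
The plan is to prove matching upper and lower bounds for $\eta(K_t^n)$.

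For the upper bound, consider the subcube $S = \{x \in \{1,\ldots,t\}^n : x_1 = \cdots = x_k = 1\}$, which has density $\mu(S) = t^{-k}$. Its vertex boundary is $\{y : y_i \neq 1 \text{ for all } i \leq k\}$, since in any remaining coordinate one can always pick $x_i \neq y_i$ with $x \in S$; hence $\mu(\partial S) = ((t-1)/t)^k$. The ratio $\log \mu(S)/\log \mu(\partial S)$ evaluates to $\log t/(\log t - \log(t-1))$ for every $k$, establishing the upper bound on $\eta(K_t^n)$ (and, at $n = k = 1$, the value of $\eta(K_t)$ itself).

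For the lower bound, I would prove by induction on $n$ that $\mu(\partial S) \geq \mu(S)^{\gamma}$ for every nonempty $S \subseteq K_t^n$, where $\gamma := (\log t - \log(t-1))/\log t$, equivalently $t^\gamma = t/(t-1)$. The base case $n = 1$ follows directly from this identity. For the inductive step, slice $S$ along the first coordinate as $S = \bigcup_{v=1}^t (\{v\} \times S_v)$ and let $\alpha_v := \mu(S_v)$ and $\beta_v$ the density of the boundary of $S_v$ in $K_t^{n-1}$. A vertex $(u,y)$ lies in $\partial S$ iff $y$ lies in the boundary of some $S_v$ with $v \neq u$, so sorting $\beta_1 \geq \cdots \geq \beta_t$ and using $\mu(\bigcup_{v\neq u}\,\cdot\,) \geq \max_{v\neq u}(\,\cdot\,)$ yields
\[
\mu(\partial S) \geq \frac{(t-1)\beta_1 + \beta_2}{t},
\]
while the inductive hypothesis $\alpha_v \leq \beta_v^{1/\gamma}$ yields $\mu(S) = \tfrac{1}{t}\sum_v \alpha_v \leq \tfrac{1}{t}\sum_v \beta_v^{1/\gamma}$.

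Combining these bounds reduces the inductive step to the algebraic inequality
\[
b_1^\gamma + \frac{b_2^\gamma}{t-1} \geq \left(\sum_{v=1}^t b_v\right)^\gamma
\]
for $b_1 \geq \cdots \geq b_t \geq 0$ (set $b_v := \beta_v^{1/\gamma}$, after simplification using $t^{1-\gamma} = t-1$). This is the main technical obstacle. I would prove it by homogeneity, normalizing $b_1 = 1$; then, for fixed $s := b_2 + \cdots + b_t \in [0,t-1]$, the LHS is minimized under the sorted constraint by the tie $b_2 = \cdots = b_t = s/(t-1)$. The problem reduces to the single-variable claim $h(s) := 1 + s^\gamma/(t-1)^{1+\gamma} - (1+s)^\gamma \geq 0$ on $[0, t-1]$. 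Direct computation gives $h(0) = h(t-1) = 0$, and since $s \mapsto (s/(1+s))^{\gamma - 1}$ is strictly monotone, $h'$ has a unique sign change in the interior, forcing $h$ to be unimodal and hence nonnegative on $[0,t-1]$. The two equality cases at $s = 0$ and $s = t-1$ correspond precisely to the subcube extremizers from the upper bound, confirming the bound is sharp. The asymptotic $\log t/(\log t - \log(t-1)) = t \log t + \Theta(\log t)$ is then a routine Taylor expansion of $\log(1 - 1/t)$.
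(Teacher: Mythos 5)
Your argument is correct, and it takes a genuinely different technical route from the paper's on the lower bound. The paper first reduces to \emph{compressed} sets via Claim~\ref{claim:c-D}: compression forces the nesting $S_1\supseteq S_2\supseteq\cdots$, which simultaneously orders the slice densities $\mu(S_i)$ and their boundaries $\mu(\partial S_i)$, so the inductive step can feed the two largest slices into the two-variable inequality of Claim~\ref{claim:bound-D-ineq} and then bound $\mu(S_1)+(t-1)\mu(S_2)\ge\sum_i\mu(S_i)$ using the ordering. You skip compressions entirely: you sort the slices only by their boundary densities $\beta_v$, observe that the inductive hypothesis gives $\alpha_v\le\beta_v^{1/\gamma}=:b_v$ slice-by-slice (so $\sum_v\alpha_v\le\sum_v b_v$ regardless of whether the $\alpha_v$ are ordered), and then need a $t$-variable inequality $b_1^\gamma+\frac{b_2^\gamma}{t-1}\ge(\sum_v b_v)^\gamma$ for $b_1\ge\cdots\ge b_t\ge 0$, which reduces by the tie $b_2=\cdots=b_t$ to the paper's two-variable case and which you verify by a clean unimodality argument (one should note explicitly that $h'(0^+)>0$, which pins the shape as increase-then-decrease, but that is immediate from $s^{\gamma-1}\to\infty$). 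The paper is aware that compressions are not strictly needed for this theorem (it says so before introducing them) but retains them because they are essential for Theorem~\ref{lem:bound-D-exact} and the combinatorial stability proof; your version is a tidy demonstration of the compression-free route for Theorem~\ref{thm:isoperimetry-tensor} alone, at the cost of a mildly more general algebraic inequality. The upper bound via subcubes is identical to the paper's Claim~\ref{claim:tensor-poor-expansion}.
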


In addition to this high-level structure, we give a more-fine-tuned
analysis of the behavior of $\Phi_t(\eta) := \inf_{n \ge 1}
\Phi(K_t^n, \eta)$. (See Theorem~\ref{lem:bound-D-exact}.)

\subsection{Independent set stability}\label{subsec:ind-stab}
With these vertex isoperimetric inequalities, we apply them
to the understanding the structure of near-maximum independent sets of
graphs. Such results are known as \emph{stability} results.

Such results are not just of interest within combinatorics, a better
understanding of independent set stability of certain graphs, such as
$K_t^n$, have resulted in advances in hardness of approximation,
particularly in construct dictatorship tests for approximate graph
coloring and independent set problems (e.g., \cite{Alon, Dinur2008,
  Brakensiek2016b}). In fact the investigation which led to the
results in this paper was inspired by the pursuit of such results.

A landmark result of this form due to \cite{Alon} is as follows.

\begin{thm}[\cite{Alon}]\label{thm:ADFS} For all $t \ge 3$
  there exist $C_t$ with the following property.  For any positive
  integer $n$, Let $I \subset\ K_t^n$ be an independent set
  such that $\epsilon = 1 - t\mu(I)$, then there exists an
  independent set $J \subset K_t^n$ of maximum size ($\mu(J) =
  1/t$) such that $\mu(I \Delta J) \le C_t\epsilon$, where $S \Delta T = (S
  \setminus T) \cup (T \setminus S)$.
\end{thm}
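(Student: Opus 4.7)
The plan is to combine a Hoffman-type spectral argument with an $L^2$-to-$L^1$ bootstrap that exploits the Boolean structure of $\mathbf{1}_I$. The normalized adjacency operator $A$ of $K_t^n$ is the $n$-fold tensor product of the normalized adjacency of $K_t$, whose eigenvalues are $1$ on constants and $-\tfrac{1}{t-1}$ on mean-zero functions. Thus $L^2([t]^n)$ decomposes orthogonally as $\bigoplus_{k=0}^n V^{=k}$, where $V^{=k}$ is the span of tensor products of $k$ mean-zero factors with $n-k$ constant factors, and $A$ acts on $V^{=k}$ by $\lambda_k := \bigl(-\tfrac{1}{t-1}\bigr)^k$. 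Writing $f := \mathbf{1}_I = \sum_{k=0}^n f^{=k}$, independence of $I$ gives $\langle f, Af \rangle = 0$, i.e.\ $\sum_k \lambda_k \|f^{=k}\|_2^2 = 0$.

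Combining this identity with $\|f\|_2^2 = \|f\|_1 = \tfrac{1-\epsilon}{t}$, $\|f^{=0}\|_2^2 = \tfrac{(1-\epsilon)^2}{t^2}$, and the uniform bound $|\lambda_k| \le \tfrac{1}{(t-1)^2}$ for $k \ge 2$, a short rearrangement forces $\|f^{=1}\|_2^2 = \tfrac{t-1}{t^2} + O_t(\epsilon)$ and $M := \sum_{k \ge 2} \|f^{=k}\|_2^2 = O_t(\epsilon)$; at $\epsilon = 0$ this already recovers the Hoffman bound with equality. Decompose $f^{=1}(x) = \sum_{i=1}^n g_i(x_i)$ with each $g_i : [t] \to \mathbb{R}$ of mean zero and $\sum_i \|g_i\|_2^2 \approx \tfrac{t-1}{t^2}$. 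I then want to identify a coordinate $i^*$ carrying essentially all of this mass and a value $a^* \in [t]$ such that $g_{i^*}$ is $O_t(\epsilon)$-close to the dictator profile $b \mapsto \tfrac{t-1}{t}\mathbf{1}_{b=a^*}-\tfrac{1}{t}\mathbf{1}_{b \ne a^*}$; then set $J := \{x : x_{i^*} = a^*\}$.

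Since $\mathbf{1}_J \in V^{=0}\oplus V^{=1}$, Pythagoras yields
\[
\|f - \mathbf{1}_J\|_2^2 = \bigl(\mu(I)-\tfrac{1}{t}\bigr)^2 + \|f^{=1} - (\mathbf{1}_J - \tfrac{1}{t})\|_2^2 + M,
\]
and if the junta step is executed with $O_t(\epsilon)$ loss in the middle term, then Boolean-ness of both $f$ and $\mathbf{1}_J$ lets me convert $L^2$ into $L^1$ via $\mu(I \Delta J) = \|f - \mathbf{1}_J\|_1 = \|f - \mathbf{1}_J\|_2^2 = O_t(\epsilon)$, as required. The main obstacle is precisely this junta-identification step: showing that one coordinate dominates and that $g_{i^*}$ matches the dictator profile to order $\epsilon$ rather than the $\sqrt{\epsilon}$ one would naively deduce from $M = O(\epsilon)$ via Cauchy--Schwarz. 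My intended tool is a Friedgut--Kalai--Naor-style hypercontractive inequality adapted to the $K_t^n$-Fourier expansion, asserting that Boolean functions with small level-$\ge 2$ mass are $O(\epsilon)$-close in $L^1$ to a dictator; a combinatorial alternative is to double-count, for each candidate $i^*, a^*$, the $(t-1)^{n-1}$ neighbors in $\{y_{i^*} = a^*\}$ of each $x \in I \setminus J$, all of which are forbidden from $I$, parlaying the resulting inequality into $|I \setminus J|, |J \setminus I| \le O_t(\epsilon\cdot t^n)$.
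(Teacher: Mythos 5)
Your sketch reproduces, in outline, the Fourier-analytic proof of the cited source \cite{Alon} (and the refinement by Ghandehari--Hatami), not a new argument. The theorem is quoted in this paper as an imported result and is \emph{not} re-proved here by spectral methods; the paper's own contribution is a genuinely different, purely combinatorial proof via compressions (Claims~\ref{claim:c-ind}, \ref{claim:c-D}), the vertex-isoperimetric inequality for $K_t^n$ (Theorem~\ref{thm:isoperimetry-tensor}, refined in Theorem~\ref{lem:bound-D-exact}), and a dichotomy/induction for independent sets (Claims~\ref{claim:ind-set-gap}, \ref{claim:ind-set-gap-better}, Lemmas~\ref{lem:ind-set-collapse}, \ref{lem:ind-set-collapse-full}, \ref{lem:click}), culminating in Theorem~\ref{thm:tensor-stability-better}. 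That route both recovers Theorem~\ref{thm:ADFS} for small $\epsilon$ without hypercontractivity and strictly sharpens it, replacing the exponent $1$ by $\eta(K_t)>1$.

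Within your own route, the reductions you carry out are fine: expanding $\langle f, Af\rangle=0$ over the eigenspaces, using $\|f\|_2^2=\mu(I)$, $\|f^{=0}\|_2^2=\mu(I)^2$, and $|\lambda_k|\le(t-1)^{-2}$ for $k\ge2$, one indeed extracts $M:=\sum_{k\ge2}\|f^{=k}\|_2^2=O_t(\epsilon)$ together with $\|f^{=1}\|_2^2=\tfrac{t-1}{t^2}+O_t(\epsilon)$, and the identity $\|f-\mathbf{1}_J\|_1=\|f-\mathbf{1}_J\|_2^2$ for $\{0,1\}$-valued functions is exactly the right $L^2$-to-$L^1$ conversion. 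However, the step you yourself label ``the main obstacle'' --- isolating a dominant coordinate $i^*$ and a value $a^*$ and showing $g_{i^*}$ matches the dictator profile with $O(\epsilon)$ rather than $O(\sqrt{\epsilon})$ loss --- \emph{is} the content of the theorem. This is precisely the generalized FKN theorem over $[t]^n$ proved in \cite{Alon}; invoking it as a black box would derive \cite{Alon}'s theorem from \cite{Alon}'s lemma, which is fine as attribution but not a proof. The combinatorial double-counting alternative you gesture at does not close the gap either: it presupposes that $(i^*,a^*)$ has already been identified and, absent a structural input like the paper's isoperimetric dichotomy, yields nothing near $O_t(\epsilon)$. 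So your proposal correctly reduces Theorem~\ref{thm:ADFS} to the $K_t^n$-FKN step, but leaves that step open; for a self-contained argument along a different road, Sections~\ref{sec:isoperimetry} and \ref{sec:stab} are the intended route.
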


In other words, independent sets of near-maximum size are similar in
structure to the maximum independent sets. Note that if $J$ is an
independent set of maximum size, then for some $i \in [n]$ and $j \in
[t]$, we have that
\[
J = [t]^{i-1} \times \{j\} \times [t]^{n - i}.
\]
This is a well-known result due to \cite{Greenwell1974} (see
\cite{Alon2004} for a proof using Fourier analysis).

Ghandehari and Hatami improved this result (Theorem 1 of
\cite{Ghandehari2008}) to show that if $t \ge 20$ and $\epsilon \le
10^{-9}$ then $C_t$ can be replaced with $40/t$. Both results were
proven using Fourier analysis.

We improve upon this result in two steps. First, with an application
of Theorem~\ref{thm:isoperimetry-tensor} we improve
Theorem~\ref{thm:ADFS} in a black-box matter to obtain.

\begin{thm}\label{thm:tensor-stability}
  For all $t \ge 3$, there exists $\epsilon_t > 0$ with the following property.
   For any positive
  integer $n$, Let $I \subset\ K_t^n$ be an independent set
  such that $\epsilon = 1 - t\mu(I) < \epsilon_t$, then there exists an
  independent set $J \subset K_t^n$ of maximum size ($\mu(J) =
  1/t$) such that \begin{align}\mu(I \setminus J) \le 4\epsilon^{\eta(K_t)} =
  4\epsilon^{\log t/(\log t - \log(t-1))}.\label{eq:stab-tensor-clique}
\end{align}
\end{thm}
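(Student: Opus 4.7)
The plan is to use Theorem~\ref{thm:ADFS} (ADFS) as a black box to locate the candidate maximum independent set $J$, and then bootstrap via a dual application of the isoperimetric inequality of Theorem~\ref{thm:isoperimetry-tensor} to sharpen the linear ADFS bound to the claimed polynomial one. First I would apply ADFS to produce a maximum independent set $J$ with $\mu(I \Delta J) \le C_t \epsilon$. Since $J$ is maximum, after relabeling coordinates we may assume $J = \{x \in [t]^n : x_1 = 1\}$, and we have the \emph{a priori} estimate $\sigma := \mu(I \setminus J) \le C_t\epsilon$ which will seed the bootstrap.

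Next I would slice $I$ along the first coordinate: for $a \in [t]$ set $I_a = \{x \in I : x_1 = a\}$, regarded as a subset of $[t]^{n-1}$ of density $\nu_a := |I_a|/t^{n-1} = t\mu(I_a)$. A short calculation gives $\nu_1 = 1 - \epsilon - t\sigma$ and $\sum_{a \ge 2}\nu_a = t\sigma$. Because $I$ is independent in $K_t^n$, for any $a \ne b$ no $x \in I_a$ is $K_t^{n-1}$-adjacent to any $y \in I_b$: their first coordinates already differ, so any potential $K_t^n$-edge between them must be witnessed on the remaining $n-1$ coordinates. Taking $a = 1$, every element of $\bigcup_{b \ge 2}I_b$ lies in $A := [t]^{n-1} \setminus \partial_{K_t^{n-1}} I_1$. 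Vertices of $A$ have no $K_t^{n-1}$-neighbor in $I_1$, hence $\partial A \subseteq [t]^{n-1} \setminus I_1$ and $\nu(\partial A) \le 1 - \nu(I_1) = \epsilon + t\sigma$. Applying Theorem~\ref{thm:isoperimetry-tensor} to $A$ in $K_t^{n-1}$ in the form $\nu(A) \le \nu(\partial A)^{\eta(K_t)}$ (valid because $\eta(K_t^{n-1}) = \eta(K_t)$) then gives $\nu(A) \le (\epsilon + t\sigma)^{\eta(K_t)}$, and summing the bounds $\nu_b \le \nu(A)$ over $b \ge 2$ yields the implicit inequality
$$t\sigma \;\le\; (t-1)\,(\epsilon + t\sigma)^{\eta(K_t)}.$$

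To close, I would iterate: the ADFS estimate $\sigma \le C_t\epsilon$ shows $\epsilon + t\sigma$ is already comparable to $\epsilon$, and plugging back into the implicit inequality reduces the correction to $\epsilon + t\sigma \le (1 + o_\epsilon(1))\epsilon$ once $\epsilon < \epsilon_t$ for a threshold depending only on $t$. Since $\eta(K_t) > 1$, this yields $\sigma \le \frac{t-1}{t}(1 + o_\epsilon(1))\epsilon^{\eta(K_t)} \le 4\epsilon^{\eta(K_t)}$, proving the theorem. The principal obstacle is conceptual rather than computational: one must resist trying to isoperimetrically bound the boundary of the excess $I \setminus J$ itself (whose neighbors are scattered across all $t-1$ non-$J$ slabs and include much of $I$), and instead apply the isoperimetric inequality to the complement-of-boundary of the \emph{bulk} $I_1$, whose boundary is automatically tiny because $I_1$ already occupies almost all of $J$. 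Once this reversal is in place, the rest is bookkeeping.
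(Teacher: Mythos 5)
Your proposal is correct and arrives, after the dust settles, at the exact same implicit inequality as the paper's Lemma~\ref{lem:click} — namely $t\sigma \le (t-1)(\epsilon + t\sigma)^{\eta(t)}$ — but you get there by a genuinely dual application of the isoperimetric inequality, and you close the gap by a different argument. The paper fixes a single slab $J' = [t]^{n-1}\times\{j\}$ of the overflow with $\mu(I \cap J')$ maximal and applies Lemma~\ref{lem:bound-D} to $I\cap J'$: since $\partial(I\cap J')\cap J$ is disjoint from $I\cap J$, the boundary of that one slab eats a definite amount of $J$ out of $I$. You instead apply the inequality to $A := [t]^{n-1}\setminus\partial_{K_t^{n-1}} I_1$, the complement of the neighborhood of the bulk, observe $\partial A$ avoids $I_1$ so $\mu(\partial A)\le \epsilon + t\sigma$, and lower-bound $\mu(A)$ using that \emph{all} of the overflow lies inside $A$. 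Since $I_j \subseteq A$ and $\partial I_j \subseteq \partial A$, the two estimates interlock and deliver the same numbers, which is why the implicit inequality coincides.

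The other place you diverge is how the implicit inequality is converted to $\sigma \le 4\epsilon^{\eta(t)}$. The paper factors this step out as Lemma~\ref{lem:click} with the explicit hypothesis $\mu(I\setminus J) < 1/t^3$, absorbing $t\delta$ into the $\delta^{1/\eta(t)}$ term via the algebraic bound $t\delta \le t\delta^{1/\eta(t)}/(t-1)^3$ and then appealing to Claim~\ref{claim:close-ind-ineq}; this modularity pays off later because the same lemma is reused verbatim to finish Theorem~\ref{thm:tensor-stability-better} from the purely combinatorial Lemma~\ref{lem:ind-set-collapse-full}. Your two-step bootstrap (seed $\sigma \le C_t\epsilon$, feed through the implicit inequality once to get $\sigma = O_t(\epsilon^{\eta(t)})$, feed through again to get the constant below~$4$) is sound because $\eta(t) > 1$, and it produces a valid threshold $\epsilon_t$ depending on $t$ and $C_t$ — just note that the constant in your first pass is $\tfrac{t-1}{t}(1+tC_t)^{\eta(t)}$, which does depend on $C_t$, so the final $\epsilon_t$ must be taken small enough to tame $\epsilon^{\eta(t)-1}$ against it; you gesture at this but it deserves a sentence. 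Your guiding intuition about not bounding the boundary of $I\setminus J$ directly is correct as stated, though worth noting the paper's choice (the boundary of a single slab $I\cap J'$, which is constant in the last coordinate) sidesteps the "scattered across slabs" problem in a complementary way.
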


\begin{rem}
  Since $\mu (I \setminus J) \le 4\epsilon^{\eta(K_t)}$,
  \[
  \mu(I \Delta J) = \mu(I \setminus J) + \mu(J \setminus I) = \mu(J) -
  \mu(I) + 2\mu(I \setminus J) = \frac{\epsilon}{t} + 4\epsilon^{\eta(K_t)},
  \]
  so our result gives the optimal first-order structure for
  Theorem~\ref{thm:ADFS} assuming $\epsilon$ is sufficiently
  small. Furthermore, in Appendix~\ref{app:opt}, we give
  examples of independent sets of $K_t^n$ with arbitrarily small
  density (assuming $n \to \infty$) for which the exponent $\eta(K_t)$
  is optimal.
\end{rem}

Next, using a purely combinatorial argument we pin down a precise value for $\epsilon_t$.

\begin{thm}\label{thm:tensor-stability-better}
  In Theorem \ref{thm:tensor-stability}, for all $t \ge 3$, one may
  set $\epsilon_t = 1 - \frac{3}{t} + \frac{2}{t^2}$. In other words,
  the theorem applies for all independent sets $I$ such that $\mu(I)
  > \frac{3t - 2}{t^3}$. 
\end{thm}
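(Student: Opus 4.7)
My plan is to prove the following purely combinatorial strengthening, which implies Theorem~\ref{thm:tensor-stability-better} at once: \emph{if $I \subseteq K_t^n$ is an independent set with $|I| > (3t-2)t^{n-3}$, then $I$ is contained in some star $J_{i,j} = [t]^{i-1} \times \{j\} \times [t]^{n-i}$.} Choosing $J = J_{i,j}$ then yields $\mu(I \setminus J) = 0 \le 4\epsilon^{\eta(K_t)}$, so the conclusion of Theorem~\ref{thm:tensor-stability} is valid whenever $\epsilon < 1 - 3/t + 2/t^2$, i.e., whenever $\mu(I) > (3t-2)/t^3$. The threshold is sharp: the ``three concurrent lines'' family $I_0 = (L_1 \cup L_2 \cup L_3) \times [t]^{n-3}$, where $L_s \subset [t]^3$ consists of the points agreeing with $(1,1,1)$ in coordinates $\{1,2,3\} \setminus \{s\}$, has $|I_0| = (3t-2)t^{n-3}$ by inclusion--exclusion, is intersecting, and lies in no star.

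I would prove the structural claim by induction on $n$. The cases $n = 1$ and $n = 2$ reduce to an easy check that any sufficiently large intersecting family in $[t]$ or $[t]^2$ lies on a common line. The base case $n = 3$ is the combinatorial core: any intersecting $I \subseteq [t]^3$ with $|I| \ge 3t - 1$ lies in a star. Fixing $x \in I$, the failure of $I \subseteq J_{i, x_i}$ for each $i \in \{1,2,3\}$ produces witnesses $y^{(i)} \in I$ with $y^{(i)}_i \ne x_i$; a case analysis on which pair of coordinates each $y^{(i)}$ shares with $x$, combined with the required non-adjacency among the $y^{(i)}$'s, forces all of $I$ to lie on three concurrent lines through $x$, bounding $|I| \le 3t - 2$ and contradicting the assumption.

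For the inductive step ($n \ge 4$), I partition $I$ along coordinate $1$ into slices $I_1, \ldots, I_t$ with projections $\tilde I_j \subseteq [t]^{n-1}$. Averaging gives some $|I_{j^*}| > (3t-2)t^{n-4}$, so the inductive hypothesis places $\tilde I_{j^*}$ in a star, i.e., $I_{j^*} \subseteq \{x : x_1 = j^*, x_i = k\}$ for some $i \ge 2$, $k \in [t]$. If $I \subseteq J_{1, j^*}$ or $I \subseteq J_{i, k}$ we are done; otherwise a witness $u \in I$ with $u_1 \ne j^*$ and $u_i \ne k$ must be non-adjacent to every element of $I_{j^*}$, forcing $x_m = u_m$ for some $m \in [n] \setminus \{1, i\}$, which gives $|I_{j^*}| \le (n-2) t^{n-3}$. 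For $n = 4$ and $t \ge 3$ this already contradicts $|I_{j^*}| > (3t-2)t^{n-4}$.

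The main obstacle is closing the induction for $n \ge 5$, where the single-witness bound $(n-2)t^{n-3}$ no longer beats the threshold. I would dichotomize on the size of $U = \{u \in I : u_i \ne k\}$: if $|U|$ is small then $|I \cap J_{i,k}|$ remains large enough that the inductive hypothesis applied to its projection further localizes $I$ to a sub-star of $J_{i, k}$, eventually forcing $I$ globally into a single star; if $|U|$ is large then simultaneously imposing the coordinate-agreement constraints from many witnesses shrinks the admissible region for $I_{j^*}$ below $(3t-2)t^{n-4}$. Bookkeeping these cases carefully is the delicate combinatorial heart of the argument, in the same spirit as the Hilton--Milner and Ellis--Keller--Lifshitz-style arguments referenced in the introduction.
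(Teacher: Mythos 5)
Your proposed ``strengthening''---that any independent set $I \subseteq K_t^n$ with $\mu(I) > (3t-2)/t^3$ must be \emph{contained} in a star $J_{i,j}$---is false, and this breaks the entire approach. Stability (Theorem~\ref{thm:tensor-stability}) asserts only that $I$ is \emph{close} to a star, i.e.\ $\mu(I \setminus J) \le 4\epsilon^{\eta(t)}$, not that $\mu(I\setminus J) = 0$. The paper's own Appendix~\ref{app:opt} gives an explicit family witnessing the gap: taking $J_n = [1]\times[t]^{n-1}$ and $I_n = ([t]\times[1]^{n-1}) \cup (J_n \setminus ([1]\times\{2,\hdots,t\}^{n-1}))$, one computes $\mu(I_n) = \frac{1}{t}\bigl(1 - \frac{(t-1)^{n-1} - (t-1)}{t^{n-1}}\bigr)$, which for $n \ge 5$ strictly exceeds $\frac{3t-2}{t^3}$ (indeed $\mu(I_n) \to 1/t$ as $n\to\infty$), yet $\mu(I_n \setminus J_n) = \frac{t-1}{t^n} > 0$ and $I_n$ lies in no star (it contains $(2,1,\hdots,1)$, $(1,2,1,\hdots,1)$, $\hdots$, $(1,\hdots,1,2)$, and no single coordinate is constant across these). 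Concretely, for $t=3$, $n=5$ one has $\mu(I_5) = 67/243 > 63/243 = \frac{3t-2}{t^3}$. So the structural claim you set out to prove is simply not true, and no amount of bookkeeping in the inductive step can rescue it.

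The correct statement you need in place of ``$I$ lies in a star'' is ``$\mu(I\setminus J_{1,j}) < (t-1)/t^4$ for some $j$,'' and proving this is genuinely harder: the paper obtains it (Lemmas~\ref{lem:ind-set-collapse} and \ref{lem:ind-set-collapse-full}) by first establishing a \emph{dichotomy}---Claims~\ref{claim:ind-set-gap} and \ref{claim:ind-set-gap-better} show that for a sorted $I$ with $\mu(I) > (3t-2)/t^3$, each quantity $\mu(I \setminus J_{1,j})$ is either very small ($< (t-1)/t^4$) or quite large ($> (2t-1)(t-1)/t^4$), with a forbidden middle range---and then ruling out the ``all large'' case via compressions and the isoperimetric profile $\Phi_t$ from Theorem~\ref{lem:bound-D-exact}. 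The dichotomy itself relies crucially on the vertex isoperimetry bounds, which play no role in your sketch. Once the localization $\mu(I\setminus J_{1,j}) < (t-1)/t^4 < 1/t^3$ is in hand, Lemma~\ref{lem:click} amplifies it to the quantitative bound $4\epsilon^{\eta(t)}$. You would need to replace your false containment claim with this weaker localization statement and rebuild the induction around it; the naive slicing/witness argument you describe does not produce the forbidden middle range, which is the key leverage that makes the induction close.
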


The choice of $\epsilon_t$ is not arbitrary, it corresponds to the
density of the following independent set.

\[
I = \{(1, 1, a), (1, a, 1), (a, 1, 1) \mymid a \in [t]\} \times [t]^{n-3}. 
\]

Note that $\mu(I) = \frac{3t-2}{t^3}.$ This set represents a phase
transition in the independent sets from `dictators' to `juntas,' as
the $I$ constructed above is equally influenced by $3$ coordinates
(where `influence' is in the sense of \cite{Alon}). Such phase
transitions have been studied in the literature \cite{Dinur2008}, but
this may be the first work to highlight the exact transition point.

Additionally, to the best of the author's knowledge,
this is the first known purely combinatorial proof of
Theorem~\ref{thm:ADFS}.

\subsubsection{Related work}\label{subsubsec:kneser}

Such stability results for independent sets have also been studied for
Kneser graphs. A result similar to that of Theorem~\ref{thm:ADFS} was proved by
\cite{Friedgut2008}. Numerous other works in the literature
\cite{Dinur2009a, Dinur2005, Balogh2008, Keevash2008,
  Keevash2010,Filmus2016,Filmus2016a} prove generalized stability
results for Kneser graphs or other structures related to intersecting
families. 

A result which also finds a ``tight'' super constant exponent $\eta >
1$ for the independent set stability is proved in some very recent
work
\cite{Ellis2016,Ellis2016a,Ellis2016b,Keller2016,Keller2016a,Ellis2017}
on Kneser graphs and related structures. (See also \cite{Ellis2017a}
and Proposition 4.3 of \cite{Filmus2016b}.) The techniques have
high-level similarity to the ones adopted here:\footnote{The author
  became aware of these similar proofs only after writing major portions
  of the manuscript.}  particularly in their use of compressions to
prove a isoperimetric inequality which they then bootstrap to a
combinatorial independent set stability result.

\subsection{Paper organization}

In Section~\ref{sec:isoperimetry} we prove the claimed vertex
isoperimetric inequalities. In Section~\ref{sec:stab}, we prove the
stability results for near-maximum independent sets in $K_t^n$.
Appendix~\ref{app:ineq} proves some algebraic inequalities omitted
from the main text. Appendix~\ref{app:profile} proves
Theorem~\ref{lem:bound-D-exact}, which gives a refined understanding
the isoperimetric profile of Kneser graphs. Appendix~\ref{app:opt}
shows that the exponent of $\eta(t)$ in
Theorems~\ref{thm:tensor-stability} and
\ref{thm:tensor-stability-better} is optimal.

\section{Vertex isoperimetric Inequalities} \label{sec:isoperimetry}

In this section, we proceed to prove the isoperimetry results claimed
in Section~\ref{subsec:vertex-iso}.

Identify the vertex set of $K_t^n$ with $[t]^n$. Two vertices of $x, y
\in [t]^n$ are connected in $K_t^n$ if and only if $x_i \neq y_i$ for
all $i \in [n].$ Denote $y_{\neg i} := (y_1, \hdots, y_{i-1}, y_{i+1}, \hdots,
y_n)$. We often write $y$ as $(y_i, y_{\neg i})$ when it is clear from
context which coordinate is being inserted.

\subsection{Compressions}
A useful tool in our study will be the operation of the well-known
technique of \textit{compressions} (e.g., \cite{Sauer1972,
  Shelah1972}). Although compressions are not strictly necessary to
prove Theorem~\ref{thm:isoperimetry-tensor}, they are essential in the
proof of stronger isoperimetry results as well as
Theorem~\ref{thm:tensor-stability-better}, so we introduce the
machinery now.

For $S \subseteq [t]^n$ be a
subset, define the \emph{compression of $S$ in coordinate $i$} to be
  \begin{align}c_i(S) = \left\{x \in \mathbb [t]^n \mymid x_i \le |\{y\in S \mymid
    y_{\neg i} = x_{\neg i}\}|\right\}.\end{align}

Informally, we `shift' each element of $S$ to be as small as possible
in the $i$th direction. Note that $\mu(c_i(S)) = \mu(S)$ for all $S
\subseteq \mathbb [t]^n$. It is easy to see that $c_i$ is
\emph{nilpotent}: $c_i(c_i(S)) = c_i(S)$ for all $S \subseteq [t]^n$
and $i \in [n]$.

We say that a set $S$ is \textit{compressed} if $c_i(S) = S$ for all
$i \in [n]$. Equivalently, for all $x \in S$ there is no $y \in [t]^n
\setminus S$ such that $x_i \le y_i$ for all $i \in [n]$.

\begin{rem}\label{rem:compressed} Note that every time a compression $c_i$ is applied, the
quantity
\[
\Sigma(S) := \sum_{x \in S} \sum_{j \in [n]} x_j
\]
decreases or stays the same (in which case $c_i(S) = S$). Thus, since
$\Sigma(S)$ is always positive, there must exist a finite sequence of
compressions which can be applied to $S$ to make the set compressed.
\end{rem}

Now we show that compressions respect independent
sets of $K_t^n$. This result is not needed until
Section~\ref{sec:stab}, but the proof does give intuition for how the
compressions work.

\begin{claim}\label{claim:c-ind} For all $i \in [n]$ and all $I \subset
  \mathbb [t]^n$ independent set of $K_t^n$, $c_i(I)$ is also an
  independent set of $K_t^n$.
\end{claim}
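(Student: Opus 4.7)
My plan is to argue by contradiction. Suppose $c_i(I)$ contains two vertices $x, y$ joined by an edge of $K_t^n$, so $x_j \neq y_j$ for every $j \in [n]$; I will produce an edge of $K_t^n$ with both endpoints in $I$, contradicting independence.

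The key step is to translate back from $c_i(I)$ to $I$ using fibers over the $i$-th coordinate. Let
\[
A = \{z_i \mymid z \in I,\ z_{\neg i} = x_{\neg i}\}, \qquad B = \{z_i \mymid z \in I,\ z_{\neg i} = y_{\neg i}\},
\]
so by definition of the compression, $x_i \le |A|$ and $y_i \le |B|$. In particular both $A$ and $B$ are nonempty. I will look for $\alpha \in A$ and $\beta \in B$ with $\alpha \neq \beta$: if such $\alpha, \beta$ exist then $(\alpha, x_{\neg i})$ and $(\beta, y_{\neg i})$ are both in $I$, and since $x_{\neg i}$ and $y_{\neg i}$ already disagree in every coordinate (because $x, y$ form an edge), these two elements of $I$ form an edge of $K_t^n$, which is the desired contradiction.

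The only place this search can fail is when no such pair exists, which forces $A$ and $B$ to be the \emph{same} singleton $\{\alpha\}$; then $|A| = |B| = 1$ forces $x_i = y_i = 1$, contradicting $x_i \ne y_i$. So the only honest obstacle is this degenerate single-fiber case, and handling it is exactly where the requirement $x_i \ne y_i$ gets used. No more case analysis should be necessary, so the write-up should be very short.
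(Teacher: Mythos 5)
Your proposal is correct and takes essentially the same approach as the paper: both lift the hypothetical edge $\{x,y\}$ in $c_i(I)$ back to an edge in $I$ by choosing preimages in the two fibers over coordinate $i$, using the hypothesis $x_i \ne y_i$ to guarantee that preimages with distinct $i$-th coordinates exist. The paper phrases it by picking two elements $y', z'$ in the $y$-fiber and noting one must differ from $x'$ in coordinate $i$, while you pick one element from each fiber and rule out the degenerate case; these are minor bookkeeping variations of the same argument.
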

\begin{proof} Assume not, then there exist $x, y \in c_i(I)$
  such that $\{x, y\}$ is an edge. In particular, since $x_i \neq y_i$, we must have that
  $x_i \neq 1$ or $y_i \neq 1$. Assume without loss of generality that
  $y_i \neq 1$. Then, by definition of $c_i(I)$, there must be $z :=
  (1, y_{\neg i}) \in c_i(I)$. Since $x, y, z \in c_i(I)$, there must
  be $x', y', z' \in I$ such that \begin{align*} x_{\neg i} &= x_{\neg
      i}'\\y_{\neg i} = z_{\neg i} &= y_{\neg i}' = z_{\neg i}'\\y_i'
    &\neq z_i'. \end{align*}
  Since $y_i' \neq z'_i$, we must either have that $x_i' \neq y_i'$ or
  $x_i' \neq z_i'$. In the former case, $\{x', y'\}$ is an edge of
  $K_t^n$ and in the latter case $\{x', z'\}$ is an edge of
  $K_t^n$. This contradicts the fact that $I$ is an independent set.
\end{proof}

Next we show that compressions can only decrease the size of the
vertex boundary.

\begin{claim}\label{claim:c-D} For all $i  \in [n]$ and $S \subseteq
  [t]^n$, $|\partial c_i(S)| \le |\partial S|$.
\end{claim}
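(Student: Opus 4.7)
The plan is to proceed fiber-by-fiber along the $i$-th coordinate and compare, for each $w \in [t]^{n-1}$, the number of boundary vertices of the form $(j, w)$ before and after the compression. For each $w$, set $A(w) = \{y_i \in [t] \mymid (y_i, w) \in S\}$ and $A^c(w) = \{y_i \mymid (y_i, w) \in c_i(S)\}$. By the definition of $c_i$, one has $A^c(w) = \{1, 2, \ldots, |A(w)|\}$, and in particular $|A^c(w)| = |A(w)|$ and $A^c(w) = \emptyset$ iff $A(w) = \emptyset$. Unwinding the definition of vertex boundary in $K_t^n$, a vertex $(j, w)$ belongs to $\partial S$ iff there exists $w' \in N(w) := \{w' \in [t]^{n-1} \mymid w'_k \ne w_k \text{ for all } k\}$ and some $y_i \in A(w')$ with $y_i \ne j$.

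I would then split on the pattern of $\{A(w') \mymid w' \in N(w)\}$. If all these sets are empty, the fiber above $w$ contributes nothing to either $\partial S$ or $\partial c_i(S)$; if some $|A(w')| \ge 2$, then for every $j \in [t]$ one can pick a witness $y_i \in A(w') \setminus \{j\}$, so the entire fiber of $t$ vertices is in both boundaries. The interesting case is when every $A(w')$ has size at most $1$ but at least one is nonempty. Writing $V := \bigcup_{w' \in N(w)} A(w')$, a vertex $(j, w) \in \partial S$ iff $V \not\subseteq \{j\}$, so the fiber contributes $t-1$ vertices when $|V| = 1$ and $t$ vertices when $|V| \ge 2$. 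After compression, however, every nonempty $A^c(w')$ is exactly $\{1\}$, so the corresponding $V^c$ satisfies $|V^c| = 1$, and the compressed fiber contributes exactly $t-1$ vertices to $\partial c_i(S)$.

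In every case the contribution of the fiber above $w$ to $\partial c_i(S)$ is at most its contribution to $\partial S$; summing over $w \in [t]^{n-1}$ yields $|\partial c_i(S)| \le |\partial S|$. I do not expect significant obstacles here, since compression acts independently on each fiber and the only delicate point is case~3: one must observe that compression collapses the set of values appearing across the neighboring fibers of $w$ down to $\{1\}$, which prevents those fibers from ever forcing all $t$ elements of the fiber above $w$ into the boundary, whereas before compression the witness values in the $A(w')$ could genuinely span multiple elements of $[t]$ and push the contribution from $t-1$ up to $t$.
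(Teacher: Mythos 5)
Your proof is correct and takes essentially the same fiber-by-fiber approach as the paper: fix a column $T$ in the $i$th direction, observe that its contribution to the boundary lies in $\{0, t-1, t\}$, and show that compression can only decrease that contribution. Your case analysis (indexed by the sizes of the sets $A(w')$) is just a slightly more explicit unpacking of the paper's case analysis (indexed by $|T \cap \partial S|$), with your key observation that compression collapses each nonempty $A(w')$ to $\{1\}$ corresponding precisely to the paper's remark that $\partial T \cap c_i(S)$ is constant in the $i$th coordinate.
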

\begin{proof} Fix $\bar{a} := a_1, \hdots, a_{i-1}, a_{i+1}, \hdots,
  a_n \in [t]$. Consider $T = \{(a_1, \hdots, a_{i-1})\}
  \times [t] \times \{(a_{i+1}, \hdots, a_n\} \subset [t]^n$.

  Note that for every vertex $v \in [t]^n$, $\partial \{v\} \cap
  T$ either has $0$ or $t-1$ elements. Thus, $|T
  \cap \partial S| \in \{0, t-1, t\}$. We claim that $|T \cap \partial
  c_i(S)| \le |T \cap \partial S|$ for all $T$.

  \begin{itemize}
  \item If $|T\cap \partial S| = 0$, then there are no edges between $S$ and
  $T$ and shifting the vertices of $S$ in the $i$th coordinate cannot
  change that. Thus, $|T \cap \partial c_i(S)| = 0$.

  \item If $|T\cap \partial S| = t-1$, then the set $\partial
  T \cap S$ must be constant in the $i$th coordinate. Thus,
  $c_i(\partial T \cap S) =
  \partial T \cap c_i(S)$ is constant in the $i$th coordinate, so $|T
  \cap \partial c_i(S)| = t-1$.

  \item If $|T\cap \partial S| = t$,
  then trivially $|T\cap \partial c_i(S)| \le t.$
  \end{itemize}

  Thus, summing $|T\cap \mathcal \partial c_i(S))| \le |T\cap \partial
  S|$ across all possible $T$, we have that $|\partial c_i(S)| \le
  |\partial S|$.
\end{proof}

\begin{rem}
The proof crucially uses the fact that $\partial S$ can include
elements of $S$. If we instead had defined the vertex boundary to be
$\partial S \setminus S$, there is a simple counterexample. Consider
$t = 3$ and $n = 2$ and $S = \{(1, 2), (1, 3), (2, 1), (3, 1)\}.$
Then it is not hard to check that $|\partial S| = |\partial c_1(S)| = 8$,
but $|\partial S \setminus S| = 4 < 5 = |\partial c_1(S) \setminus c_1(S)|$.
\end{rem}

\subsection{Proof of Theorem~\ref{thm:isoperimetry-tensor}}
Define
\begin{align}
\eta(t) &:= \frac{\log t}{\log t - \log (t-1)} = t\log t +
\Theta(\log t).
\end{align}
First, we show that $\eta(K_t^n) \le \eta(t)$. In fact, we show a
whole family of equality cases.
\begin{claim}\label{claim:tensor-poor-expansion}
  For all positive integers $n$ and $t$ such that $t \ge 3$,
  $\eta(K_t^n) \le \eta(t)$.
\end{claim}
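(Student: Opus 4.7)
The statement is an upper bound on $\eta(K_t^n)$, so the proof should be by construction: exhibit an explicit family of subsets $S \subset [t]^n$ witnessing $\log\mu(S)/\log\mu(\partial S) \le \eta(t)$, then invoke the definition of $\eta(G)=\eta(G,1)$ as an infimum. The natural candidates are ``dictator-like'' sets obtained by fixing coordinates, since the maximum independent set $\{j\}\times[t]^{n-1}$ already has $\mu(\partial S) = (t-1)/t$ and $\mu(S)=1/t$, giving ratio exactly $\log t/(\log t-\log(t-1))=\eta(t)$. Iterating this construction across multiple coordinates should give a whole family meeting the bound with equality.

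Concretely, the plan is to set, for each $1\le k\le n$,
\[
S_k := \{1\}^{k}\times [t]^{\,n-k}\subseteq [t]^n.
\]
Then $\mu(S_k)=t^{-k}$. For the boundary: a vertex $x\in[t]^n$ lies in $\partial S_k$ iff there is some $y\in S_k$ with $y_i\neq x_i$ for every $i$. On the first $k$ coordinates this forces $x_i\neq 1$; on the last $n-k$ coordinates, the freedom to choose $y_i\in[t]$ imposes no constraint since $t\ge 2$. Hence
\[
\partial S_k = \{2,\ldots,t\}^{k}\times [t]^{\,n-k},\qquad \mu(\partial S_k)=\Bigl(\tfrac{t-1}{t}\Bigr)^{k}.
\]
Both quantities lie strictly in $(0,1)$ whenever $k\ge 1$ and $t\ge 3$, so $S_k$ is a valid competitor in the infimum defining $\eta(K_t^n)=\eta(K_t^n,1)$.

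Plugging in, the relevant ratio is
\[
\frac{\log \mu(S_k)}{\log \mu(\partial S_k)} \;=\; \frac{-k\log t}{\,k\bigl(\log(t-1)-\log t\bigr)\,} \;=\; \frac{\log t}{\log t-\log(t-1)} \;=\; \eta(t),
\]
independently of $k$ and $n$. Taking the infimum over the family $\{S_k\}$ yields $\eta(K_t^n)\le\eta(t)$, as claimed. For the asymptotic assertion $\eta(t)=t\log t+\Theta(\log t)$, a Taylor expansion $\log\bigl(t/(t-1)\bigr)=-\log(1-1/t)=\tfrac{1}{t}+\tfrac{1}{2t^2}+O(t^{-3})$ gives $\eta(t)=\log t\cdot\bigl(t-\tfrac12+O(1/t)\bigr)$, which is the stated form.

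There is really no obstacle here: the claim is a one-sided bound proved by a single explicit construction, and the matching lower bound (the content of Theorem~\ref{thm:isoperimetry-tensor}) is what requires genuine work in the rest of Section~\ref{sec:isoperimetry}. The only care needed is to confirm that every $S_k$ in the family is an admissible witness, i.e.\ that $\mu(\partial S_k)<1$ (which holds for all $k\ge 1$) so that the logarithms are well-behaved and the infimum in $\eta(K_t^n,1)$ genuinely sees this family.
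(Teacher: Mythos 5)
Your proof is correct and follows exactly the same approach as the paper: the same family of sets $S_k = \{1\}^k\times[t]^{n-k}$, the same computation of $\partial S_k = \{2,\ldots,t\}^k\times[t]^{n-k}$, and the same ratio calculation giving $\eta(t)$. The extra Taylor expansion at the end, while correct, addresses the asymptotic form of $\eta(t)$ that is part of Theorem~\ref{thm:isoperimetry-tensor}'s statement rather than this claim, so it is not needed here.
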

\begin{proof}
  For all integers $k \in [n]$, consider $S = \{1\}^k \times
  [t]^{n-k}$. Then $\partial S = \{2, \hdots, t\}^k \times
  [t]^{n-k}$. Thus,
  \[
  \eta(K_t^n) \le \frac{\log \mu(S)}{\log \mu(\partial S)} =
  \frac{\log t^{-k}}{\log ((t-1)^kt^{-k})} = \frac{k\log
    \frac{1}{t}}{k \log \frac{t-1}{t}} = \eta(t).\qedhere
  \]
\end{proof}

The lower-bound is more difficult, we first need the following
inequality, proved in Appendix~\ref{app:ineq}.

  \begin{claim} \label{claim:bound-D-ineq} Let $t \ge 2$ be a positive
integer and let $x \ge y \ge 0$ be real numbers, then
    \begin{align} y^{1/\eta(t)} + (t-1)x^{1/\eta(t)} \ge (t-1)(x +
(t-1)y)^{1/\eta(t)} \label{eq:bound-D-ineq}
    \end{align}
  \end{claim}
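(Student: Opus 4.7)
The plan is to reduce to a single-variable problem by homogeneity and then analyze it via elementary calculus, exploiting the defining identity of the exponent $\eta(t)$.

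Set $\alpha := 1/\eta(t) = (\log t - \log(t-1))/\log t$, so that the key algebraic identity
\[
t^\alpha \;=\; \tfrac{t}{t-1}, \qquad \text{equivalently} \qquad (t-1)\,t^\alpha \;=\; t,
\]
holds. For $t = 2$ we have $\alpha = 1$ and the inequality reduces to $y + x \ge x + y$, so assume $t \ge 3$, in which case $\alpha \in (0,1)$. Both sides of \eqref{eq:bound-D-ineq} are homogeneous of degree $\alpha$ in $(x,y)$, so I may normalize $x = 1$, setting $s := y \in [0,1]$, and prove
\[
f(s) \;:=\; s^\alpha + (t-1) - (t-1)\bigl(1 + (t-1)s\bigr)^\alpha \;\ge\; 0.
\]

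First I would verify the boundary values. Clearly $f(0) = (t-1) - (t-1) = 0$, and
\[
f(1) \;=\; t - (t-1)\,t^\alpha \;=\; t - (t-1)\cdot\tfrac{t}{t-1} \;=\; 0,
\]
using the identity above. Next, differentiating gives
\[
f'(s) \;=\; \alpha\, s^{\alpha-1} \;-\; \alpha(t-1)^2 \bigl(1+(t-1)s\bigr)^{\alpha-1}.
\]
Since $\alpha - 1 < 0$, the equation $f'(s) = 0$ rearranges to $s/(1+(t-1)s) = (t-1)^{-2/(1-\alpha)}$. Using $1-\alpha = \log(t-1)/\log t$, one finds $(t-1)^{-2/(1-\alpha)} = t^{-2}$, whence the unique critical point is
\[
s^* \;=\; \frac{1}{t^2 - t + 1} \;\in\; (0,1).
\]
The map $s \mapsto s/(1+(t-1)s)$ is strictly increasing, so $f'(s) > 0$ on $(0, s^*)$ and $f'(s) < 0$ on $(s^*, 1)$. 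Combined with $f(0) = f(1) = 0$, this shows $f$ rises from $0$, attains its (positive) maximum at $s^*$, then falls back to $0$; in particular $f \ge 0$ on $[0,1]$, which is the desired inequality.

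The only subtle step is the identification of the critical point: the whole argument hinges on the fact that the exponent is chosen precisely so that $f$ vanishes at \emph{both} endpoints, and this is exactly the identity $(t-1)t^\alpha = t$ that defines $\eta(t)$. For any other exponent the two boundary zeros would fail to line up, and the inequality would not be sharp (indeed, would fail for some $s$). The rest — boundary checks, differentiation, and a monotonicity argument for $s/(1+(t-1)s)$ — is routine.
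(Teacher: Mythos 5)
Your proof is correct, but it takes a genuinely different route from the paper's. The paper introduces the auxiliary function $f_c(z) := (z+c)^{\alpha} - z^{\alpha}$ (with $\alpha = 1/\eta(t)$), notes that $f_c$ is nonincreasing on $(0,\infty)$ because $\alpha \le 1$, and then applies $f_c(y) \ge f_c(x)$ with the clever choice $c = (t-1)y$; the defining identity $t^{\alpha} = t/(t-1)$ then makes $(t-1)f_c(y)$ collapse exactly to $y^{\alpha}$, and rearranging $(t-1)f_c(y) \ge (t-1)f_c(x)$ gives the claim directly. You instead normalize by homogeneity, verify that $f(s) = s^{\alpha} + (t-1) - (t-1)(1+(t-1)s)^{\alpha}$ vanishes at both endpoints $s=0,1$, locate the unique interior critical point $s^* = 1/(t^2-t+1)$, and argue unimodality. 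Both hinge on the same algebraic fact ($t^{\alpha}=t/(t-1)$): the paper uses it once to make the telescoping term vanish, while you use it twice, once to make $f(1)=0$ and once to simplify $(t-1)^{-2/(1-\alpha)} = t^{-2}$. The paper's argument is shorter and avoids solving for the critical point; yours gives more structural information (where the slack is largest) and makes explicit why the exponent is the unique one for which the inequality is tight at both endpoints. One tiny gap in your write-up: the normalization to $x=1$ requires $x>0$; you should note that $x=0$ forces $y=0$ and both sides vanish, so this case is trivial.
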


\begin{lem}\label{lem:bound-D} For positive integers $n \ge 1$ and $t \ge
  3$ and all $S \subseteq [t]^n$, we have that 
\begin{align} \mu(\partial(S)) \ge
\mu(S)^{1/\eta(t)}. \label{eq:bound-D}
\end{align}
Therefore $\eta(K_t^n) \ge \eta(t)$.
\end{lem}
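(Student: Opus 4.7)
The plan is to prove the inequality by induction on $n$, using compressions to restrict to a highly structured subclass of sets, and then applying the algebraic inequality of Claim~\ref{claim:bound-D-ineq} to close the induction.

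For the base case $n=1$, any $S \subseteq [t]$ of size $1$ has $|\partial S| = t-1$, giving equality in \eqref{eq:bound-D} by the definition of $\eta(t)$, and any $S$ of size $\ge 2$ has $\partial S = [t]$, so the inequality is trivial. For the inductive step, by Claim~\ref{claim:c-D} and Remark~\ref{rem:compressed}, I may assume without loss of generality that $S$ is compressed. I will slice $S$ along the last coordinate: write $S_j := \{x' \in [t]^{n-1} \mymid (x',j) \in S\}$ for $j \in [t]$. The crucial consequence of $S$ being compressed (in particular of $c_n(S) = S$) is the chain of inclusions
\[
S_1 \supseteq S_2 \supseteq \cdots \supseteq S_t.
\]

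The next step is to identify the slices of $\partial S$. A point $(y',j)$ lies in $\partial S$ iff there exist $k \neq j$ and $x' \in S_k$ with $x'_i \neq y'_i$ for all $i \in [n-1]$, i.e.\ iff $y' \in \bigcup_{k \neq j} \partial S_k$ (boundary in $K_t^{n-1}$). Combining this with the chain of inclusions above gives $(\partial S)_1 = \partial S_2$ and $(\partial S)_j = \partial S_1$ for $j \geq 2$, so
\[
t \cdot \mu(\partial S) = \mu(\partial S_2) + (t-1)\,\mu(\partial S_1).
\]
The induction hypothesis yields $\mu(\partial S_1) \ge \mu(S_1)^{1/\eta(t)}$ and $\mu(\partial S_2) \ge \mu(S_2)^{1/\eta(t)}$. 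Writing $x := \mu(S_1)$ and $y := \mu(S_2)$ (so $x \ge y$), and using that $|S_j| \le |S_2|$ for $j \ge 2$ to bound $t\mu(S) \le x + (t-1)y$, it suffices to establish
\[
y^{1/\eta(t)} + (t-1)\,x^{1/\eta(t)} \;\ge\; t \cdot \left(\frac{x + (t-1)y}{t}\right)^{1/\eta(t)}.
\]
Because $1 - 1/\eta(t) = \log(t-1)/\log t$, one has $t^{1-1/\eta(t)} = t-1$, so the required inequality is precisely Claim~\ref{claim:bound-D-ineq} applied to $x$ and $y$, closing the induction.

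The only potentially delicate step is verifying the slice identity $(\partial S)_j = \partial S_{3-j}$ (in the sense above) for compressed $S$; this uses the monotone-chain structure $S_1 \supseteq \cdots \supseteq S_t$ in an essential way, and is what allows the two-variable inequality of Claim~\ref{claim:bound-D-ineq} (rather than a messier $t$-variable inequality in $\mu(S_1), \ldots, \mu(S_t)$) to suffice. The final statement $\eta(K_t^n) \ge \eta(t)$ is then immediate from \eqref{eq:bound-D} by taking logarithms, which combined with Claim~\ref{claim:tensor-poor-expansion} proves Theorem~\ref{thm:isoperimetry-tensor}.
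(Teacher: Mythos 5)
Your proof is correct and follows essentially the same route as the paper: reduce to compressed sets via Claim~\ref{claim:c-D} and Remark~\ref{rem:compressed}, slice along the last coordinate to get the nested chain $S_1 \supseteq \cdots \supseteq S_t$, identify the slices of $\partial S$ in terms of $\partial S_1$ and $\partial S_2$, apply the inductive hypothesis, and close with Claim~\ref{claim:bound-D-ineq}. Your write-up is in fact a bit cleaner than the paper's (you correctly state the slice identity as an equality and spell out why the two-variable inequality suffices), and the only omission is the trivial $S = \emptyset$ subcase of $n=1$.
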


\begin{proof} By Claim~\ref{claim:c-D} and Remark~\ref{rem:compressed}, it suffices to consider the
  case that $S$ is compressed. We now proceed by induction on $n$.

  For our base case, $n = 1$, we must have that $S = \emptyset$ in
  which case (\ref{eq:bound-D}) is trivial, or $S = [k]$ for some
  positive integer $k \le t$. If $S = [1]$, then $\partial S = \{2,
  \hdots, t\}$, in which case we have an equality case of
  (\ref{eq:bound-D}) by the proof of Claim~\ref{claim:tensor-poor-expansion}. Otherwise, if $k \ge 2$, then $\partial S =
  [t]$, so $\mu(\partial S) = 1$, so (\ref{eq:bound-D}) holds.

  For $n \ge 2$, assume by the induction hypothesis that
  (\ref{eq:bound-D}) is true for all $S\subseteq \mathbb Z_t^m$ where $1
  \le m < n$. For all $i \in [t]$, let
  \begin{align}
  S_i &:= \{x_{\neg n} \mymid x_n \in S, x_n = i\} \label{eq:S_i}\\
  (\partial S)_i &:= \{x_{\neg n} \mymid x_n \in \partial S, x_n = i\}.
  \end{align}
  Since $S$ is compressed for all $1 \le i \le j \le t$, we have that
  $S_i \supseteq S_j$. Thus, if $i \in \{2, \hdots, t\}$ is nonzero,
  for any $x \in (\partial S)_i$, there is $y \in S_0$ connected to
  $x$ by an edge of $K_t^{n-1}$. Thus, $\partial S_0
  \subseteq (\partial S)_i$. Similarly, for any $x \in (\partial
  S)_0$, there is $y \in S_1$ such that $x$ is disjoint from
  $y$. Therefore, $\partial S_1 \subseteq (\partial S)_0$. Putting
  these together, 
  \begin{align*}
    \mu(\partial S) &= \frac{1}{t}\sum_{i\in[t]} \mu((\partial S)_i)\\
    &\ge \frac{1}{t}(\mu(\partial S_1) + (t-1)\mu(\partial S_0))\\
 &\ge \frac{1}{t}\left(\mu(S_1)^{1/\eta(t)} +
(t-1)\mu(S_0)^{1/\eta(t)}\right),
\end{align*} where we applied the inductive hypothesis in the last
step. Applying Claim \ref{claim:bound-D-ineq}, using the fact that $0
\le \mu(S_1) \le \mu(S_0)$, we have that
  \begin{align*}
    \mu (\partial(S)) &\ge \frac{1}{t}\left(\mu(S_1)^{1/\eta(t)} +
      (t-1)\mu(S_0)^{1/\eta(t)}\right)\\
    &\ge
    \frac{t-1}{t}\left(\mu(S_0) + (t-1)\mu(S_1)\right)^{1/\eta(t)}\\
    &\ge \frac{t-1}{t}\left(\sum_{i \in [t]}\mu(S_i)\right)^{1/\eta(t)}\\
    &= \left(\frac{1}{t}\sum_{i \in [t]} \mu(S_i)\right)^{1/\eta(t)}\\
    &= \mu(S)^{1/\eta(t)},
  \end{align*} as desired.
\end{proof}

Claim~\ref{claim:tensor-poor-expansion} and Lemma~\ref{lem:bound-D}
together imply Theorem~\ref{thm:isoperimetry-tensor}.

\subsection{A fine-tuned understanding of the isoperimetric
  profile.}

Recall the (vertex) isoperimetric profile of a graph $G$ to be
\[
\Phi(G, \nu) := \inf\{\mu(\partial S) \mymid \mu(S) \ge \nu\}.
\]
For $t \ge 3$ fixed, define
\[
\Phi_t(\nu) := \inf_{n \ge 1} \Phi(K_t^n, \nu).
\]
Note that $\Phi_t$ is non-decreasing. It is easier to work with
$\Phi_t(\nu)$ instead of each $\Phi(K_t^n, \nu)$ directly to avoid
complications with the discrete behavior of $\Phi(K_t^n, \nu)$ when
$n$ is small. By Theorem~\ref{thm:isoperimetry-tensor},
\begin{align}\Phi_t(\nu) \ge \nu^{1/\eta(t)}.\end{align} This is tight
whenever $\nu = t^{-k}$ for any integer $k\ge 0$, but ceases to be
tight when $\log_t(\nu)$ is non-integral (see Figure~\ref{fig:2}).

\begin{figure}
\begin{center}
\includegraphics[width=.6\textwidth]{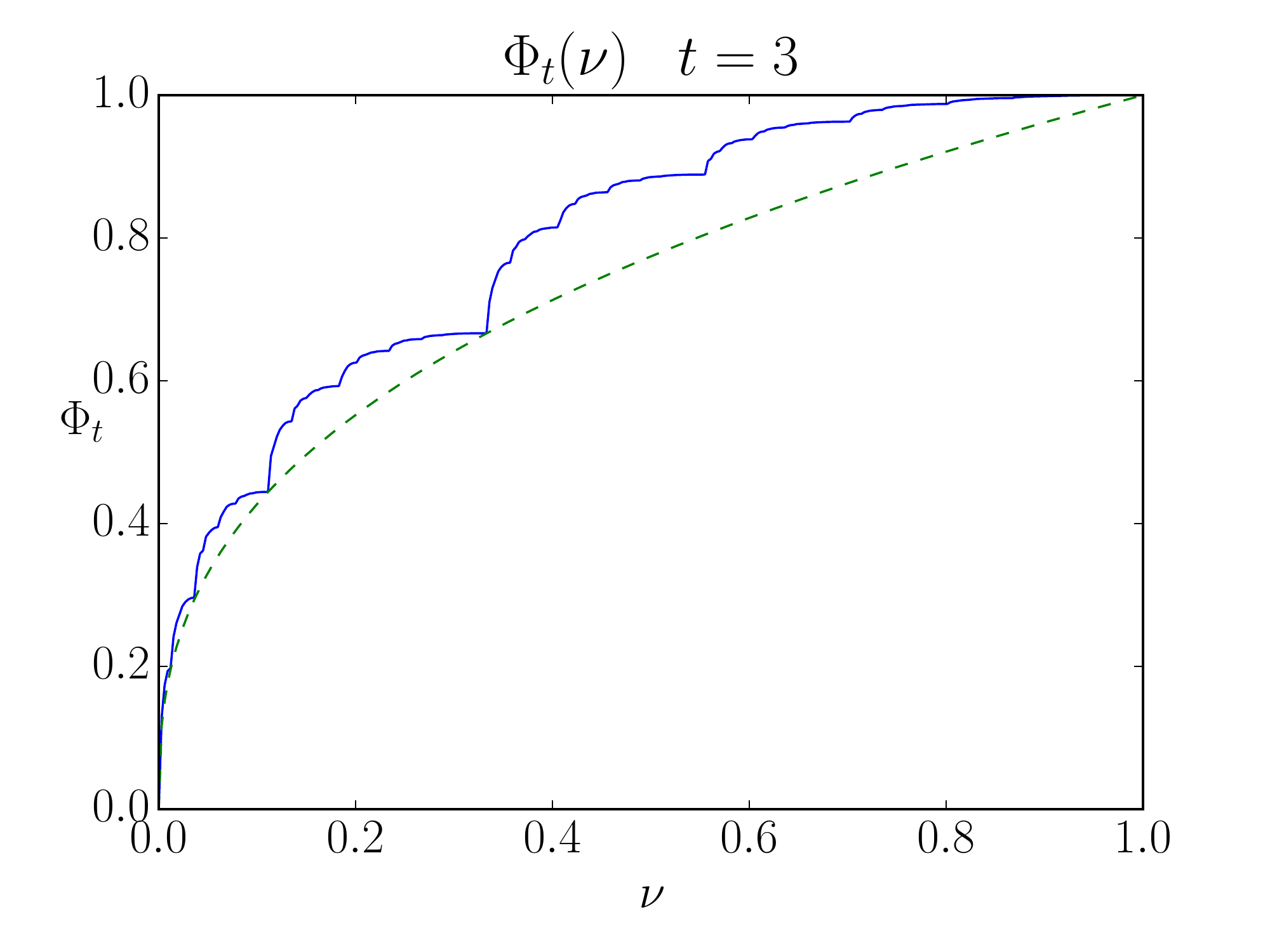}
\end{center}
\caption{A graph of $\Phi_t(\nu)$ for $t = 3$. The dashed curve
  $\nu^{1/\eta(t)}$ is for reference.}
\label{fig:2}
\end{figure}

The following recursive relationship allows one to compute
$\Phi_t(\nu)$ to arbitrary precision.

\begin{thm}\label{lem:bound-D-exact}
  For all $t \ge 3$,
  \begin{align} \Phi_t(\nu) = \begin{cases} \frac{t-1}{t}\Phi_t(t\nu) &
      \nu < 1/t\\ \frac{t-1}{t} + \frac{1}{t}\Phi_t\left(\frac{t\nu -
          1}{t-1}\right) & \nu \ge 1/t
    \end{cases}.\label{eq:bound-D-exact}
  \end{align}
\end{thm}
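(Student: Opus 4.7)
My plan is to prove each direction of the equality separately. For the upper bound $\Phi_t(\nu) \le \mathrm{RHS}$, I would exhibit explicit constructions matching the two branches of the recursion. If $\nu < 1/t$, for any $S' \subseteq [t]^{n-1}$ with $\mu(S') = t\nu$, take $S = \{1\} \times S'$; direct computation gives $\mu(S) = \nu$ and $\partial S = \{2, \ldots, t\} \times \partial S'$, so $\mu(\partial S) = \frac{t-1}{t}\mu(\partial S')$. If $\nu \ge 1/t$, take $S = (\{1\} \times [t]^{n-1}) \cup (\{2, \ldots, t\} \times S')$ with $\mu(S') = \frac{t\nu - 1}{t-1}$, giving $\mu(S) = \nu$ and $\mu(\partial S) = \frac{t-1}{t} + \frac{1}{t}\mu(\partial S')$. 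Picking $S'$ near-extremal in $[t]^{n-1}$ and letting $n \to \infty$ yields the upper bound.

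For the matching lower bound, I would induct on $n$ to show $\Phi(K_t^n, \nu) \ge R(\nu)$ where $R$ denotes the right-hand side of the recursion; taking the infimum in $n$ then completes the proof. The base $n = 1$ is a direct computation. For the inductive step, by Claim~\ref{claim:c-D} we may assume $S \subseteq [t]^n$ is compressed in the last coordinate, so its slices $S_i := \{x \in [t]^{n-1} : (x, i) \in S\}$ form a chain $S_1 \supseteq S_2 \supseteq \cdots \supseteq S_t$. The nesting forces $(\partial S)_i = \partial S_1$ for $i \ne 1$ and $(\partial S)_1 = \partial S_2$, hence
\[
\mu(\partial S) = \tfrac{t-1}{t}\mu(\partial S_1) + \tfrac{1}{t}\mu(\partial S_2) \ge \tfrac{t-1}{t}R(\mu(S_1)) + \tfrac{1}{t}R(\mu(S_2))
\]
by the inductive hypothesis. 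Writing $a := \mu(S_1)$ and $b := \mu(S_2)$, the remaining slices satisfy $\sum_{i \ge 3}\mu(S_i) \le (t-2)b$, giving the constraint $a + (t-1)b \ge t\nu$ with $a \ge b$.

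The crucial step is then the functional inequality
\[
\tfrac{t-1}{t}R(a) + \tfrac{1}{t}R(b) \ge R\!\left(\tfrac{a + (t-1)b}{t}\right) \qquad \text{for all } 0 \le b \le a \le 1,
\]
which combined with the monotonicity of $R$ and $\frac{a+(t-1)b}{t} \ge \nu$ closes the induction. I expect this inequality to be the main obstacle: it is the analog of Claim~\ref{claim:bound-D-ineq} but with $R$ replacing the pure power $(\cdot)^{1/\eta(t)}$. I would attack it by unfolding the recursion on $R$ of the averaged argument: when $\frac{a+(t-1)b}{t} < 1/t$ it reduces to the analog $R(a) + \frac{1}{t-1}R(b) \ge R(a + (t-1)b)$, and when $\frac{a+(t-1)b}{t} \ge 1/t$ it reduces to a sibling statement at the argument $\frac{a + (t-1)b - 1}{t-1}$. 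Iterating these reductions bottoms out at the extreme cases $b = 0$ (matching Case A of the recursion) and $a = 1$ (matching Case B), where equality holds; careful bookkeeping, perhaps combined with continuity of $R$ and approximation on the dense set of densities at which the recursion terminates after finitely many steps, should then establish the inequality in full generality.
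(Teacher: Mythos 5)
Your upper-bound constructions coincide exactly with Parts 1 and 3 of the paper's proof, so that direction is fine. For the lower bound you take a genuinely different route: the paper first establishes a structural lemma (Claim~\ref{claim:pack1}) via the $\Fill$/$\Fold$ operations on compressed sets viewed through the Boolean projection $\Pi$, showing that an extremal $S$ can be assumed to lie inside (or contain) a maximum independent set; this makes the recursive lower bound almost immediate (Parts 2 and 4). You instead slice a compressed $S$ in the last coordinate, use $(\partial S)_i = \partial S_1$ for $i \ne 1$ and $(\partial S)_1 = \partial S_2$ (which is correct), and push the difficulty into the functional inequality
\[
\tfrac{t-1}{t}R(a) + \tfrac{1}{t}R(b) \ \ge\ R\!\left(\tfrac{a+(t-1)b}{t}\right), \qquad 0 \le b \le a \le 1.
\]

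That inequality is, I believe, true, and your slicing scheme would close the induction if you had it. But your sketch of its proof has a concrete gap. First, the inequality genuinely fails without the hypothesis $a \ge b$: with $a=0$, $b=1$ the left side is $\frac{1}{t}$ while the right side is $R\bigl(\frac{t-1}{t}\bigr) \ge \frac{t-1}{t} > \frac{1}{t}$. So any proof must use $a \ge b$ throughout. However, your proposed reduction in the case $\frac{a+(t-1)b}{t}\ge \frac{1}{t}$ does not preserve this ordering: after unfolding $R$ at $a$ and at the averaged argument, the resulting statement involves $a' = \frac{ta-1}{t-1}$, and one can check that when $a$ is close to $b$ (e.g.\ $a=b<1$) we get $a' < b$. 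So the reduction leaves the region where the inequality is valid, and it is not the ``sibling statement'' you claim. Second, the iteration need not bottom out at $b=0$ or $a=1$ after finitely many steps, since the maps $x\mapsto tx$ and $x\mapsto \frac{tx-1}{t-1}$ generically have infinite orbits; your appeal to ``continuity of $R$ and a dense set of terminating densities'' is asserted rather than worked out and is precisely where the nontrivial work lies (continuity of $R$ is itself not established). In short: the slicing framework is sound and genuinely different from the paper's folding argument, but the crucial functional inequality is left unproved, and the specific reduction you propose for it breaks in the case $\frac{a+(t-1)b}{t}\ge\frac{1}{t}$. The paper avoids this entirely by proving the structural Claim~\ref{claim:pack1}, which is the piece you would need to replace.
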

Using the simple fact that $\Phi_t(0) = 0$ and $\Phi_t(1) = 1$,
the above equation is extremely powerful. For example,
\[\Phi_3\left(\frac{5}{9}\right) = \frac{2}{3}
+ \frac{1}{3}\Phi_3\left(\frac{1}{3}\right) = \frac{8}{9},\] which is
an exact bound compared to $(\frac{5}{9})^{1/\eta(3)} \approx
\frac{7.24}{9}$. This recursion is what allowed the creation of
Figure~\ref{fig:2}.

Theorem~\ref{lem:bound-D-exact} is proved in
Appendix~\ref{app:profile}. This more refined understanding of $\Phi_t$ proves critical in the
combinatorial proof of Theorem~\ref{thm:tensor-stability-better}.

\section{Independent set stability results} \label{sec:stab}

\subsection{Black-box result for clique tensor powers} \label{subsec:stab-bb-clique}

First, we show that if a large independent set $I$ is somewhat close to a
maximum-sized independent set $J$, then it is really close to $J$. We
fix positive integers $n$ and $t \ge 3$.

\begin{lem}\label{lem:click}
  Let $I \subset [t]^n$ be an independent set with $\epsilon :=
  1 - t\mu(I).$ Assume there exists a maximum-sized independent set
  $J$ such that
  \[
  \mu(I \setminus J) < \frac{1}{t^3}.
  \]
  Then,
  \[
  \mu(I \setminus J) < 4\epsilon^{\eta(t)}.
  \]
\end{lem}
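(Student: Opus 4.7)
The plan is to reduce the problem to a self-referential inequality on $\alpha := \mu(I \setminus J)$ and then solve that inequality quantitatively. Since every maximum independent set of $K_t^n$ has the ``dictator'' form $[t]^{i-1}\times\{j\}\times[t]^{n-i}$, I may assume without loss of generality that $J = \{x \in [t]^n : x_1 = 1\}$. For each $k \in [t]$, slice $I$ by its first coordinate: let $I_k := \{x_{\neg 1} : x \in I,\ x_1 = k\} \subseteq [t]^{n-1}$. A direct count from $\mu_n(I) = (1-\epsilon)/t$ gives $\mu_{n-1}(I_1) = 1 - \epsilon - t\alpha$ and $\sum_{k=2}^{t}\mu_{n-1}(I_k) = t\alpha$.

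The key observation is that independence of $I$ in $K_t^n$ forces the $K_t^{n-1}$-vertex-boundary of each $I_k$ (for $k \geq 2$) to be disjoint from $I_1$: if $y \in \partial I_k \cap I_1$ is witnessed by some $z \in I_k$ with $y_i \neq z_i$ for all $i$, then the lifts $(1, y)$ and $(k, z)$ both lie in $I$ yet are $K_t^n$-neighbors, contradicting independence. Consequently $\mu_{n-1}(\partial I_k) \leq 1 - \mu_{n-1}(I_1) = \epsilon + t\alpha$, and Theorem~\ref{thm:isoperimetry-tensor} applied to $K_t^{n-1}$ gives $\mu_{n-1}(I_k) \leq (\epsilon + t\alpha)^{\eta(t)}$. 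Summing over $k = 2, \ldots, t$ yields the key inequality
\begin{equation}
\alpha \leq \frac{t-1}{t}\,(\epsilon + t\alpha)^{\eta(t)}. \tag{$\ast$}
\end{equation}

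It remains to combine $(\ast)$ with $\alpha < 1/t^3$ to conclude $\alpha < 4\epsilon^{\eta(t)}$. If $\epsilon$ is large enough that $4\epsilon^{\eta(t)} \geq 1/t^3$, the bound is immediate from the hypothesis. Otherwise I would argue by contradiction: assuming $\alpha \geq 4\epsilon^{\eta(t)}$, extracting $\eta(t)$-th roots of $(\ast)$ and using $\alpha^{1/\eta(t)} \geq 4^{1/\eta(t)}\epsilon$ yields a linear lower bound $\alpha \geq C_t\epsilon$ where $C_t := \tfrac{1}{t}\bigl[(4t/(t-1))^{1/\eta(t)} - 1\bigr] > 0$. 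Substituting this back (so $\epsilon \leq \alpha/C_t$, and hence $\epsilon + t\alpha \leq (1/C_t + t)\alpha$) into $(\ast)$ produces an absolute floor $\alpha \geq \bigl(t/((t-1)(1/C_t + t)^{\eta(t)})\bigr)^{1/(\eta(t)-1)}$, which one verifies exceeds $1/t^3$ for every $t \geq 3$, contradicting the hypothesis.

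The main obstacle is this last numerical inequality in $t$ alone (and in $\eta(t) = \log t / (\log t - \log(t-1))$). I expect it to yield to elementary monotonicity bounds on $\eta(t)$, in the spirit of the algebraic lemmas in Appendix~\ref{app:ineq}, with the tightest regime being small $t$ (in particular $t=3$, where $\eta(t)$ is smallest and the margin is slimmest).
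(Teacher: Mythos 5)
Your derivation of the key inequality $(\ast)$ is correct and lands on exactly what the paper obtains (see the rearranged form of eq.~(\ref{eq:20})): the paper also slices by the dictator coordinate and bounds the boundary of slices by the isoperimetry theorem, though it singles out the slice $J' = [t]^{n-1}\times\{j\}$ of largest intersection with $I$ and works with $\mu(I \cap J') \ge \delta/(t-1)$, whereas you sum over all slices $k \ge 2$. Both routes yield $\alpha \le \frac{t-1}{t}(\epsilon + t\alpha)^{\eta(t)}$, so up to this point the approaches are essentially the same.

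Where you diverge is the final algebraic step, and here your argument is incomplete. You propose a two-stage bootstrap/contradiction that culminates in the uniform-in-$t$ inequality $\bigl(t/((t-1)(1/C_t + t)^{\eta(t)})\bigr)^{1/(\eta(t)-1)} > 1/t^3$, which you leave unverified. Spot-checks at $t=3,4,5,6,10$ and an asymptotic expansion suggest it is true, but the intermediate quantities blow up exponentially (e.g.\ $(1/C_t + t)^{\eta(t)}$ is astronomically large) and the $1/(\eta(t)-1)$-th root has to compress this back precisely enough to beat $1/t^3$; establishing this cleanly for all $t\ge 3$ is itself a nontrivial piece of work. The paper avoids this by exploiting the exact algebraic identity $t^{1 - 1/\eta(t)} = t - 1$, which makes the hypothesis $\delta < 1/t^3$ tailor-made: writing $t\delta = t\,\delta^{1/\eta(t)}\,\delta^{1-1/\eta(t)} \le t\,\delta^{1/\eta(t)}(1/t^3)^{1-1/\eta(t)} = \frac{t}{(t-1)^3}\delta^{1/\eta(t)}$ and feeding this into $\epsilon \ge \delta^{1/\eta(t)} - t\delta$ yields $\delta \le \bigl(\tfrac{(t-1)^3}{(t-1)^3 - t}\bigr)^{\eta(t)}\epsilon^{\eta(t)}$ directly. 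The only remaining verification is Claim~\ref{claim:close-ind-ineq}, a much tamer one-variable inequality. I'd recommend either proving your numerical claim rigorously (focusing, as you say, on small $t$, but also giving an honest large-$t$ argument since the quantities involved are so extreme), or replacing that step with the paper's substitution $\delta^{1-1/\eta(t)} \le (1/t^3)^{1-1/\eta(t)} = 1/(t-1)^3$, which is the cleaner route.

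One small additional caveat: your application of Theorem~\ref{thm:isoperimetry-tensor} to $K_t^{n-1}$ implicitly requires $n \ge 2$; the case $n = 1$ is degenerate and should be disposed of separately (as should the degenerate case $\epsilon = 0$, where the stated strict inequality cannot hold and one gets $\le$ instead of $<$, a typo present in the paper as well).
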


\begin{proof}
  Without loss of generality, we may assume that $J = [t]^{n-1} \times
  [1]$. Pick $J' = [t]^{n-1} \times \{j\}$ such that $j \neq 1$
  but otherwise $\mu(I \cap
  J')$ is maximal. Let $\delta := \mu(I \setminus J)$. Since $J$ and $J'$ are disjoint, we have that
  \[
  \mu(I \cap J') \ge \frac{\mu(I \setminus J)}{t -1} = \frac{\delta}{t - 1}.
  \]
  Now, consider $S = \partial (I \cap J')$. Recall the definition of
  $S_k \subseteq [t]^{n-1}$ from (\ref{eq:S_i}). Since $I \cap J'
  \subseteq J$ has the property that every element has the same last
  coordinate, $S_k = S_{k'}$ for all $k, k' \neq j$ and $S_j =
  \emptyset$. Thus, $\mu(S_k) = \frac{t}{t-1}\mu(S)$ for all $k \neq
  j$. Therefore,
  \[
  \mu(S \cap J) = \frac{1}{t}\mu((S\cap J)_i) = \frac{1}{t}\mu(S_i) = \frac{1}{t-1}\mu(S).
  \]
  Applying Theorem~\ref{thm:isoperimetry-tensor}, we get that
  \[
  \mu(S \cap J) = \frac{1}{t-1}\mu(\partial (I \cap J')) \ge
  \frac{1}{t-1}\mu(I \cap J')^{1/\eta(t)} \ge \frac{1}{t-1}\left(\frac{\delta}{t-1}\right)^{1/\eta(t)}.
  \]

  Since $I$ is an independent set, $\partial I$ is disjoint from
  $I$. Since $S \cap J = \partial (I \cap J') \cap J
  \subseteq \partial I$, we have that $I \cap J$ and $S \cap J$ are
  disjoint. Therefore,
  \begin{align}
    \mu(I \cap J) &\le \mu(J) - \mu(S \cap J)\le \frac{1}{t} -
    \frac{1}{t-1}\left(\frac{\delta}{t-1}\right)^{1/\eta(t)}. \label{eq:18}
  \end{align}
  But, we also know that
  \begin{align}
    \mu(I \cap J) = \mu(I) - \mu(I \setminus J)
    = \frac{1}{t}(1 - \epsilon) - \delta.\label{eq:19}
  \end{align}

  By (\ref{eq:18}) and (\ref{eq:19})
  \[
  \frac{1}{t}(1 - \epsilon) - \delta \le \frac{1}{t} -
    \frac{1}{t-1}\left(\frac{\delta}{t-1}\right)^{1/\eta(t)} =
    \frac{1}{t} - \frac{1}{t}\left(\frac{t\delta}{t-1}\right)^{1/\eta(t)}.
  \]

Thus,
\begin{align}
\epsilon \ge \left(\frac{t\delta}{t-1}\right)^{1/\eta(t)} -
 t\delta \ge \delta^{1/\eta(t)} - t\delta. \label{eq:20}
 \end{align}
\begin{figure}
\begin{center}
\includegraphics[width=.6\textwidth]{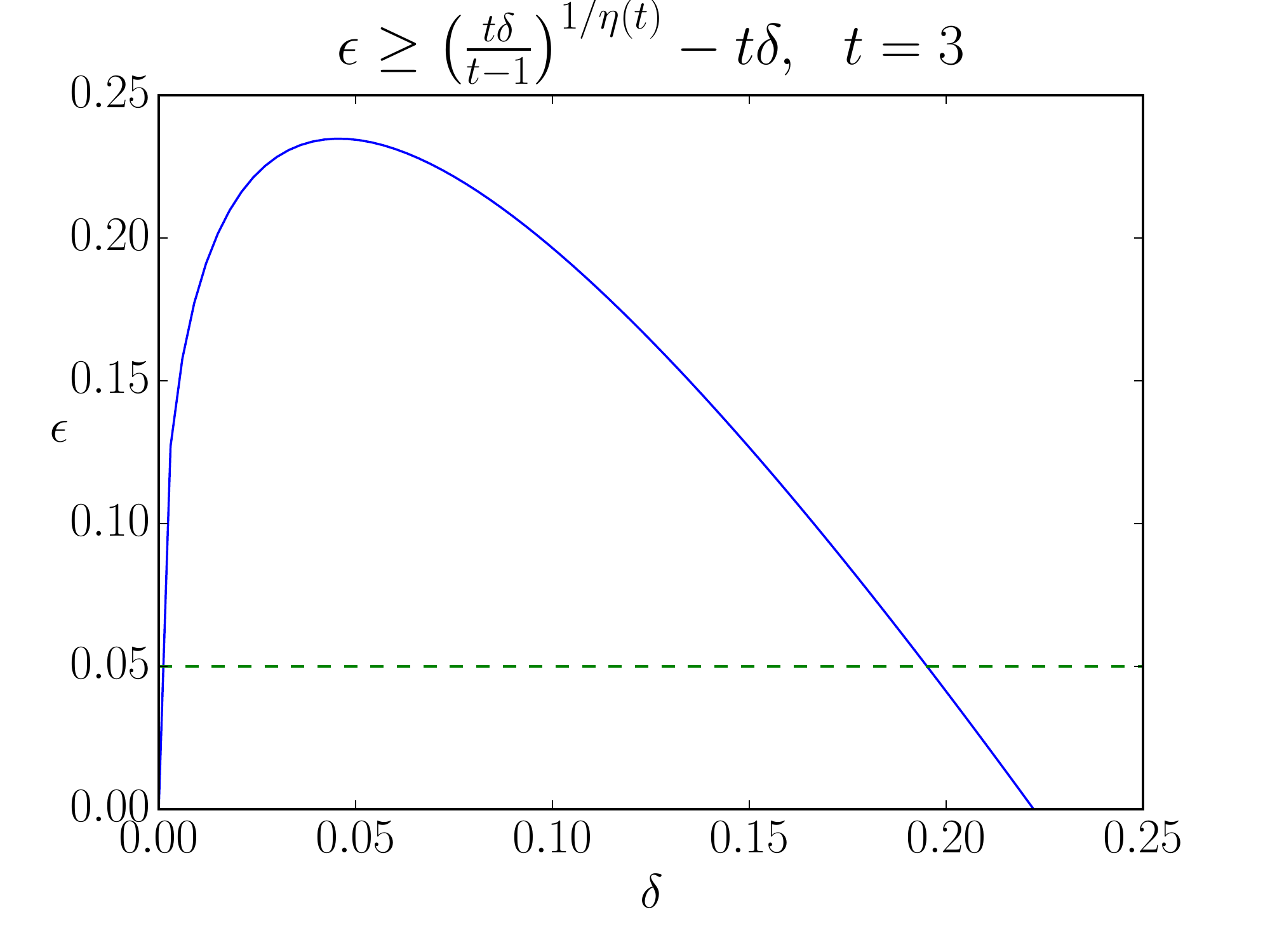}
\end{center}
\caption{Plot of (\ref{eq:20}) when $t = 3$. Notice the bifurcation of
solutions to (\ref{eq:20}) for a fixed $\epsilon$ (line $\epsilon = 0.05$ is dashed).}
\label{fig:1}
\end{figure}
Consider Figure~\ref{fig:1} which has a plot of the RHS of
(\ref{eq:20}) when $t = 3$. If $\epsilon$ is sufficiently small, then
the inequality holds only when $\delta$ is very small (polynomial in $\epsilon$) or very
large (about $\frac{1}{t}$). Since is `moderately' small ($\delta \le \frac{1}{t^3}$), we must have that $\delta$ is
very small. Quantitatively, note that
\begin{align*}
t\delta &= t\delta^{1/\eta(t)}\delta^{1-1/\eta(t)}\\
&\le t\delta^{1/\eta(t)}\left(\frac{1}{t^3}\right)^{1-1/\eta(t)}\\
&= t\delta^{1/\eta(t)}\frac{1}{t^3}\left(\frac{t^3}{(t-1)^3}\right)\\
&\le \frac{t\delta^{1/\eta(t)}}{(t-1)^3}.
\end{align*}
So
\[\epsilon \ge \delta^{1/\eta(t)}\left(1  - \frac{t}{(t-1)^3}\right).\]
Therefore,
\[\delta \le \left(\frac{(t-1)^3}{(t-1)^3 - t}\right)^{\eta(t)}\epsilon^{\eta(t)} \le
4\epsilon^{\eta(t)},\] where the last inequality follows from the
following claim which is proved in Appendix~\ref{app:ineq}.
\begin{claim}\label{claim:close-ind-ineq} For all $t \ge 3$,
 \[\left(\frac{(t-1)^3}{(t-1)^3 - t}\right)^{\eta(t)}\le 4.\]
\end{claim}
\end{proof}

We now use this lemma to `amplify' Theorem~\ref{thm:ADFS} to prove Theorem~\ref{thm:tensor-stability}.

\begin{proof}[Proof of Theorem \ref{thm:tensor-stability}.]
  Set $\epsilon_t := \frac{1}{C_t t^3} > 0$. Consider any independent
  set $I$ of of $K_t^n$ such that $\epsilon := 1 - t\mu(I) <
  \epsilon_t$. Pick any maximum-sized $J$ guaranteed by
  Theorem~\ref{thm:ADFS} such that
  \begin{align}
  \delta := \mu(I \setminus J) \le \mu(I \Delta J) \le C_t \epsilon < \frac{1}{t^3}.
  \end{align}
  By Lemma~\ref{lem:click}, we have that
  \[
  \delta \le 4\epsilon^{\eta(t)},
  \]
  as desired.
\end{proof}

\subsection{Improved stability result for clique tensor powers} \label{subsec:stab-combo}

In this section we improve $\epsilon_t$ in
Theorem~\ref{thm:tensor-stability} to an explicit expression. In fact,
we may show that
\[
\epsilon_t = 1 - \frac{3}{t} + \frac{2}{t^2}
\]
which corresponds to independent sets $I$ for which $\mu(I) > \frac{3t-2}{t^3}$.

First, we try to show that if an independent set $I$ is large enough,
then $I$ is either very close to or very far from a maximum-sized
independent set. To do this, we show that if $I$ is `moderately far'
from a maximum-sized independent set, then this moderate-sized portion
which is not in the maximum-sized independent set has such a large
vertex boundary that it precludes a large portion of the maximum-sized
independent set from being part of $I$, forcing the density of $I$ to
be at or below our threshold of $\frac{3t-2}{t^3}$.

We need a notation for the maximum sized
independent sets. For all $i \in [t]$ and $j \in [n]$ let
\begin{align}
J_{i,j} = [t]^{j-1} \times \{i\} \times [t]^{n - j}.
\end{align}

We say that $I$ is \emph{sorted} if there exists that for all $i_1,
i_2 \in [t]$ and $j \in [n]$ we have that $i_1 \le i_2$ implies that
\[
\mu(I \cap J_{i_1,j}) \le \mu(I \cap J_{i_2,j}).
\]

Note that unlike compressions, we may assume without loss of
generality that $I$ is sorted since permuting the labels so that an
independent set is sorted does not change its intersection sizes with
the maximum independent sets.

\begin{claim} \label{claim:ind-set-gap} Let $I \subset \mathbb [t]^n$
be a sorted independent set such that $\mu(I) > \frac{3t - 2}{t^3}$
(or $1 - t\mu(I) < \epsilon_t$), then
for all $j \in [n]$,
\begin{align}\mu(I \setminus J_{1,j}) < \frac{t-1}{t^4}\text{ or }\mu(I
\setminus J_{1,j}) > \frac{t-1}{t^3}.\label{eq:dicot}\end{align}
\end{claim}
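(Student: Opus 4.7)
I would argue by contradiction, closely mirroring the structure of Lemma~\ref{lem:click}. Fix $j \in [n]$ and suppose $\delta := \mu(I \setminus J_{1,j})$ lies in the closed interval $[(t-1)/t^4,\,(t-1)/t^3]$; the goal is to derive $\mu(I) \le (3t-2)/t^3$, contradicting the hypothesis. Set $K := I \setminus J_{1,j}$ and let $\pi \colon [t]^n \to [t]^{n-1}$ be the projection deleting the $j$-th coordinate. The key observation is that every $y \in J_{1,j}$ has $y_j = 1$ while every $x \in K$ has $x_j \neq 1$, so the condition $y_j \neq x_j$ holds for free; adjacency of such a pair in $K_t^n$ therefore reduces to adjacency of $\pi(y)$ and $\pi(x)$ in $K_t^{n-1}$. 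Since $\pi$ is injective on $J_{1,j}$, this yields the identity $\mu(\partial K \cap J_{1,j}) = \tfrac{1}{t}\,\mu_{n-1}(\partial_{K_t^{n-1}} \pi(K))$.

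Applying Theorem~\ref{thm:isoperimetry-tensor} in dimension $n-1$ gives $\mu_{n-1}(\partial \pi(K)) \ge \mu_{n-1}(\pi(K))^{1/\eta(t)}$, and because $K$ partitions into the $t-1$ slices $I \cap J_{i,j}$ for $i \neq 1$, which each inject into $[t]^{n-1}$ under $\pi$, pigeonhole yields $\mu_{n-1}(\pi(K)) \ge t\delta/(t-1)$. Combining, $\mu(\partial K \cap J_{1,j}) \ge \tfrac{1}{t}(t\delta/(t-1))^{1/\eta(t)}$. Since $I$ is independent, $I \cap \partial K = \emptyset$, so inside the measure-$1/t$ slice $J_{1,j}$ the disjoint sets $I \cap J_{1,j}$ and $\partial K \cap J_{1,j}$ satisfy $\mu(I \cap J_{1,j}) \le \tfrac{1}{t} - \mu(\partial K \cap J_{1,j})$. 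Adding $\delta$ to both sides,
\[
\mu(I) \;\le\; g(\delta) \;:=\; \frac{1}{t} + \delta - \frac{1}{t}\left(\frac{t\delta}{t-1}\right)^{1/\eta(t)}.
\]
Because $1/\eta(t) < 1$, the function $g$ is strictly convex on $(0,\infty)$, so its maximum over the closed interval $[(t-1)/t^4,\,(t-1)/t^3]$ is attained at an endpoint. A direct computation (the only routine calculation) shows $g((t-1)/t^4) = g((t-1)/t^3) = (3t-2)/t^3$, so $\mu(I) \le (3t-2)/t^3$ throughout the interval, giving the contradiction.

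The conceptual core is the boundary-projection identification: once one recognizes that $\partial K \cap J_{1,j}$ is controlled by an $(n-1)$-dimensional isoperimetric inequality applied to $\pi(K)$ (precisely because $K$ avoids $J_{1,j}$ so the $j$-th coordinate is already ``used up''), everything else is an endpoint-plus-convexity argument. The expected main obstacle is verifying that both endpoints $(t-1)/t^4$ and $(t-1)/t^3$ land exactly on $(3t-2)/t^3$ under $g$; this is not a coincidence but an algebraic reflection of the extremal configuration $I = \{(1,1,a),(1,a,1),(a,1,1) : a \in [t]\} \times [t]^{n-3}$ highlighted after Theorem~\ref{thm:tensor-stability-better}, and it is precisely this numerical matching that pins down the sharp threshold $\epsilon_t = 1 - 3/t + 2/t^2$. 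I would note that the ``sorted'' hypothesis is not actually used in this particular claim; it is presumably needed later in Section~\ref{sec:stab} when coordinating the dichotomies across different $j$ to single out the $J_{1,j}$ that is closest to $I$.
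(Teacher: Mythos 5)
Your argument is correct and, up to bookkeeping, is the paper's argument. Where the paper singles out the slice $I \cap J_{2,j}$ with maximal intersection and applies the isoperimetric inequality to it inside $K_t^n$, you project $K = I \setminus J_{1,j}$ into $K_t^{n-1}$ and apply the inequality there; these are arithmetically identical because $t^{1/\eta(t)} = t/(t-1)$, which is why your $\tfrac{1}{t}\bigl(t\delta/(t-1)\bigr)^{1/\eta(t)}$ coincides with the paper's $\tfrac{1}{t-1}\bigl(\delta/(t-1)\bigr)^{1/\eta(t)}$. The closing step is the same in substance: the paper rearranges to a one-sided inequality and invokes concavity of its right-hand side; you keep $\mu(I)\le g(\delta)$ and invoke convexity of $g$, both reducing to the endpoint check $g\bigl((t-1)/t^4\bigr)=g\bigl((t-1)/t^3\bigr)=(3t-2)/t^3$. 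Your observation that the ``sorted'' hypothesis is not actually needed here is correct, and indeed your pigeonhole-on-the-projection step is cleaner than the paper's, which nominally relies on ``sorted'' to justify $\mu(I\cap J_{2,j})\ge \delta/(t-1)$; as written, the paper's definition of ``sorted'' (non-decreasing in $i$) has the inequality pointing the wrong way for that purpose, so your route quietly sidesteps a small sign glitch in the text.
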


\begin{proof} Without loss of generality, we may let $j = n$. Denote
  $J := J_{1,j}$. Let $\delta = \mu(I \setminus J)$. Since $I$ is an
  independent set
  \begin{align*}
    \mu (I \cap J) &\le \mu(J) - \mu(J \cap \partial(I \cap
    J_{2,n})).
  \end{align*}
  Note that $\mu(\partial (I \cap J_{2,n}) \cap J_{i, n})$ is $0$ if
  $i = 2$ but is $\frac{1}{t-1}\mu(\partial (I \cap J_{2,n}))$
  otherwise (see the proof of Theorem~\ref{thm:tensor-stability} for more
  explanation). Thus, by Theorem~\ref{thm:isoperimetry-tensor},

  \begin{align}
    \mu (I \cap J) &\le \mu(J) - \frac{1}{t-1}\mu(\partial (I \cap
    J_{2,n}))\label{eq:bound1}\\ 
    &\le \frac{1}{t} -
    \frac{1}{t-1}\Phi_t\left(\frac{\delta}{t-1}\right)\label{eq:bound2}\\
    &\le \frac{1}{t} - \frac{1}{t-1}\left(\frac{\delta}{t-1}\right)^{1/\eta(t)}.\label{eq:bound3}
  \end{align}

 Since $\mu(I) > \frac{3t - 2}{t^3}$, we have that
  \[ \frac{1}{t} + \delta -
\frac{1}{t-1}\left(\frac{\delta}{t-1}\right)^{1/\eta(t)} >
\frac{3t-2}{t^3}.
  \] Thus, we obtain that
  \begin{align} \frac{(t-2)(t-1)}{t^2} >
\left(\frac{t\delta}{t-1}\right)^{1/\eta(t)} - t\delta.\label{eq:RHS}
  \end{align} Note that the two sides of the inequality are equal at $\delta_j
= \frac{t-1}{t^4}$ and $\delta_j = \frac{t-1}{t^3}$. Note that since
$1/\eta(t) \in(0, 1)$ for all $t \ge 3$, the RHS of (\ref{eq:RHS}) is
concave for all $\delta \ge 0$. Thus, (\ref{eq:RHS}) is false when
$\delta \in [\frac{t-1}{t^4}, \frac{t-1}{t^3}]$. Therefore, we have (\ref{eq:dicot}).
\end{proof}

From Theorem~\ref{lem:bound-D-exact}, we can attain a bound that is even
better.

\begin{claim} \label{claim:ind-set-gap-better} Let $I \subset [t]^n$ be a sorted independent set such that $\mu(I) > \frac{3t - 2}{
t^3}$, then for all $j \in [n]$,
\begin{align}\mu(I \setminus J_{1,j}) < \frac{t-1}{t^4}\text{ or }\mu(I
\setminus J_{1,j}) > \frac{(2t-1)(t-1)}{t^4}.\label{eq:dicot2}\end{align}
\end{claim}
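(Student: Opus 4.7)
The plan is to run the proof of Claim~\ref{claim:ind-set-gap} up to inequality~(\ref{eq:bound2}), but refrain from replacing $\Phi_t$ by its power-law weakening in the passage to~(\ref{eq:bound3}). Writing $\delta := \mu(I\setminus J_{1,j})$, and combining (\ref{eq:bound2}) with $\mu(I) > (3t-2)/t^3$ and the identity $\mu(I) = \mu(I \cap J_{1,j}) + \delta$, one obtains the strict constraint
\begin{align*}
\Phi_t\!\left(\frac{\delta}{t-1}\right) < \frac{(t-1)^2(t-2)}{t^3} + (t-1)\delta.
\end{align*}
With $\delta_0 := (t-1)/t^4$ and $\delta_1 := (2t-1)(t-1)/t^4$, it suffices to establish the reverse (non-strict) inequality on the entire closed interval $[\delta_0,\delta_1]$, since this rules $\delta$ out of that range.

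The first task is to verify equality at the two endpoints. A few applications of the small-$\nu$ rule of Theorem~\ref{lem:bound-D-exact} (starting from the base value $\Phi_t(1/t) = (t-1)/t$) give $\Phi_t(1/t^4) = ((t-1)/t)^4$; two small-$\nu$ steps followed by one large-$\nu$ step give $\Phi_t((2t-1)/t^4) = (t-1)^3(t+1)/t^4$. A direct algebraic check confirms these coincide with the affine right-hand side at $\delta_0$ and $\delta_1$, respectively. The same recipe shows in addition that at the natural midpoint $\delta = (t-1)/t^3$ one has $\Phi_t(1/t^3) = ((t-1)/t)^3$, which also equals the affine right-hand side there.

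The substantive step is to prove $\Phi_t(\delta/(t-1)) \ge \frac{(t-1)^2(t-2)}{t^3} + (t-1)\delta$ throughout each of the two sub-intervals $[\delta_0, (t-1)/t^3]$ and $[(t-1)/t^3, \delta_1]$. On each sub-interval I would apply the small-$\nu$ rule of Theorem~\ref{lem:bound-D-exact} enough times to move the argument of $\Phi_t$ into $[1/t,1]$, and then invoke the large-$\nu$ rule once; performing the matching rescalings on the affine right-hand side, the target inequality collapses on both sub-intervals to the elementary statement
\[
\Phi_t(v) \ge v \quad \text{for all } v \in [0,1],
\]
which follows at once from Lemma~\ref{lem:bound-D}: $\Phi_t(v) \ge v^{1/\eta(t)} \ge v$, using $1/\eta(t)\in(0,1)$ and $v\le 1$.

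The main obstacle is the algebraic bookkeeping in the two rescalings; in particular, one must notice that the endpoint $\delta_1 = (2t-1)(t-1)/t^4$ is chosen precisely so that, when the large-$\nu$ rule is invoked on the right sub-interval, the inner argument of the recursion lands exactly at $1/t$ at $\delta=\delta_1$ and at $0$ at $\delta=(t-1)/t^3$. This is what makes the rescaled affine bound agree exactly with the identity function, reducing the whole claim to the clean inequality $\Phi_t(v)\ge v$ on $[0,1]$.
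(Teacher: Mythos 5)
Your strategy is sound and the key computations check out. Deriving the strict constraint $\Phi_t(\delta/(t-1)) < \frac{(t-1)^2(t-2)}{t^3} + (t-1)\delta$ from the hypothesis $\mu(I) > (3t-2)/t^3$ together with (\ref{eq:bound2}), verifying equality at $\delta_0 = (t-1)/t^4$, at $\delta_1 = (2t-1)(t-1)/t^4$, and at the midpoint $(t-1)/t^3$, and then showing the reverse inequality on each half via the recursion of Theorem~\ref{lem:bound-D-exact}, does rule out the closed interval $[\delta_0, \delta_1]$, which is exactly (\ref{eq:dicot2}). Your route is genuinely different from the paper's: the paper first reuses Claim~\ref{claim:ind-set-gap} (which is proved via the power-law bound $\Phi_t(\nu)\ge\nu^{1/\eta(t)}$) to dispose of $[\delta_0,(t-1)/t^3]$, and only then applies the exact recursion to the remaining subinterval. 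Your approach treats the entire interval uniformly with the exact recursion plus the single elementary bound $\Phi_t(v)\ge v$, which I find cleaner and makes it transparent why the endpoints and the midpoint are the exact equality cases.

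One small slip in the bookkeeping: on the left subinterval $[\delta_0,(t-1)/t^3]$ (three small-$\nu$ steps then one large-$\nu$ step) the reduction does land exactly on $\Phi_t(v) \ge v$ for $v\in[0,1]$, as you say. But on the right subinterval $[(t-1)/t^3, \delta_1]$ (two small-$\nu$ steps then one large-$\nu$ step), substituting $\delta = (t-1)\bigl(v(t-1)+1\bigr)/t^3$ with $v \in [0,1/t]$, the affine side becomes $\frac{(t-1)^3}{t^3}(1+v)$ while the left side becomes $\frac{(t-1)^2}{t^3}\bigl((t-1)+\Phi_t(v)\bigr)$, and the inequality collapses to $\Phi_t(v) \ge (t-1)v$, \emph{not} $\Phi_t(v)\ge v$. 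This is still immediate, but it needs one more small-$\nu$ step: for $v\in[0,1/t]$, $\Phi_t(v)=\frac{t-1}{t}\Phi_t(tv) \ge \frac{t-1}{t}\cdot tv = (t-1)v$, invoking $\Phi_t(u)\ge u$ at $u=tv\in[0,1]$. So the final elementary ingredient is the same one you cite; just be precise about the extra rescaling on the right half when you write this up.
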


\begin{proof} Again, we may assume without loss of generality that $j
  = n$, let $J = J_{1,j}$. Let $\delta = \mu(I \setminus J)$. From
  Claim~\ref{claim:ind-set-gap}, we only need to consider the case
  that \begin{align}\frac{(2t-1)(t-1)}{t^4} \ge \delta > \frac{t-1}{t^3}.\label{eq:new_case}\end{align}

  From (\ref{eq:bound2})
  \[\mu (I \cap J) \le \frac{1}{t} -
\frac{1}{t-1}\Phi_t\left(\frac{\delta}{t-1}\right).
  \]
  Now make the substitution
  \[\delta = \frac{(t-1)}{t^3}(1 + \delta'),\] where $\delta' \in (0,
  \frac{t-1}{t}]$. From Theorem~\ref{lem:bound-D-exact},
  \begin{align*}
    \Phi_t\left(\frac{\delta}{t-1}\right) &= \Phi_t\left(\frac{1 +
        \delta'}{(t-1)^3}\right)\\
&= \frac{(t-1)^2}{t^2}\Phi_t\left(\frac{1 + \delta'}{t}\right)\\
&=
\frac{(t-1)^2}{t^2}\left(\frac{t-1}{t} +
\frac{1}{t}\Phi_t\left(\frac{\delta'}{t-1}\right)\right)\\
&\ge
\frac{(t-1)^2}{t^2}\left(\frac{t-1}{t} +
\frac{1}{t}\left(\frac{\delta'}{t-1}\right)^{1/\eta(t)}\right).
  \end{align*} Hence, since $\mu(I) > \frac{3t-2}{t^3}$,
  \[ \frac{t-1}{t^3}(1 + \delta') + \frac{1}{t} -
\frac{t-1}{t^2}\left(\frac{t-1}{t} +
\frac{1}{t}\left(\frac{\delta'}{t-1}\right)^{1/\eta(t)}\right)>
\frac{3t-2}{t^3}.
  \] Rearranging,
  \[ 0 > \left(\frac{\delta'}{t-1}\right)^{1/\eta(t)} - \delta'.
  \]
  Like in the proof of Claim~\ref{claim:ind-set-gap}, we have equality
  when $\delta' = 0$ and $\delta' = \frac{t-1}{t}$. Furthermore, since
  $1/\eta(t) \in (0, 1)$ for all $t \ge 3$, the RHS is concave when
  $\delta' \ge 0$. Thus, the inequality is false for all $\delta \in
  (0, \frac{t-1}{t}].$ Therefore, (\ref{eq:new_case}) can never hold,
  proving (\ref{eq:dicot2}), as desired.
\end{proof}

The next key step is to show Theorem~\ref{thm:tensor-stability-better}
essentially holds for \emph{compressed} independent sets $I$.

\begin{lem}\label{lem:ind-set-collapse} Let $I \subset \mathbb [t]^n$
be a compressed independent set such that $\mu(I) > \frac{3t - 2}{t^3}$,
then for some $j \in [n]$, 
\begin{align}
\mu(I \setminus J_{1,j}) < \frac{t-1}{t^4}.\label{eq:compressed_claim}
\end{align}
\end{lem}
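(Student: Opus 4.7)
The plan is to apply the dichotomy of Claim~\ref{claim:ind-set-gap-better} to every coordinate and show that at least one must land in the ``small'' alternative. First, I would observe that Claim~\ref{claim:ind-set-gap-better} applies to compressed $I$ as well as sorted $I$: the only property of the ordering used in its proof is the inequality $\mu(I \cap J_{2,j}) \ge \mu(I \setminus J_{1,j})/(t-1)$, which holds for compressed $I$ because the slice masses $\mu(I \cap J_{i,j})$ are non-increasing in $i$, so $\mu(I \cap J_{2,j})$ is the maximum of the $t-1$ values $\mu(I \cap J_{i,j})$ with $i \ge 2$, hence at least their average. Consequently, for each $j \in [n]$ the quantity $\delta_j := \mu(I \setminus J_{1,j})$ satisfies either $\delta_j < (t-1)/t^4$ (which proves the lemma) or $\delta_j > (2t-1)(t-1)/t^4$.

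Suppose for contradiction that $\delta_j > (2t-1)(t-1)/t^4$ for every $j$. I would exploit two structural features of a compressed $I$. First, $I$ is coordinate-wise down-closed, and its maximal elements $\{x^{(k)}\}$ must have supports $S_k := \{i : x^{(k)}_i = 1\}$ forming an intersecting family of subsets of $[n]$: if some pair $S_k, S_{k'}$ were disjoint, then for each $i$ at least one of $x^{(k)}_i, x^{(k')}_i$ is at least $2$, allowing one to pick $a \le x^{(k)}$ and $b \le x^{(k')}$ inside $I$ differing in every coordinate, contradicting independence. Second, by independence together with $(1,\ldots,1) \in I$, every $x \in I$ has at least one coordinate equal to $1$, so $I \subseteq \bigcup_k \bigcap_{i \in S_k} J_{1,i}$.

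If $\bigcap_k S_k$ contains some $j^*$, then $I \subseteq J_{1,j^*}$ and the lemma holds trivially, so assume $\{S_k\}$ is a non-star intersecting family. It then suffices to show $\mu(I) \le (3t-2)/t^3$, contradicting the hypothesis. The key combinatorial fact to prove is that any non-star intersecting family $\mathcal{F}$ of subsets of $[n]$ satisfies $\mu\bigl(\bigcup_{S \in \mathcal{F}} \bigcap_{i \in S} J_{1,i}\bigr) \le (3t-2)/t^3$. By a Hilton--Milner-style classification, non-star intersecting families of $2$-subsets are exactly the triangles $\{\{a,b\}, \{a,c\}, \{b,c\}\}$, each of which gives $\mu = (3t-2)/t^3$ by direct inclusion--exclusion (matching $I^*$ from the introduction). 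For a general non-star intersecting family containing sets of size $\ge 3$, a shifting argument should replace each such set by a $2$-subset of it while preserving the intersecting property; the replacement can only enlarge $\mu$, and iterating reduces to the triangle case.

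The main obstacle will be executing this shifting rigorously: replacing a set $S \in \mathcal{F}$ by a proper $2$-subset $S' \subset S$ preserves the intersecting condition only when every other $S'' \in \mathcal{F}$ still meets $S'$, which is not automatic. A careful case analysis, or a direct bound on $\mu$ in terms of $|\mathcal{F}|$ and set sizes via inclusion--exclusion, will likely be needed. As a backup more in the spirit of the paper, one can iterate Theorem~\ref{lem:bound-D-exact} to derive that for each $j$ the link $I \cap J_{2,j}$ (with $\mu > (2t-1)/t^4$ by compression) has a vertex boundary cutting out a forbidden chunk of $J_{1,j}$, and sum these chunks across $j$ --- controlling overlaps of density $1/t^2$ between different $J_{1,j}$ --- to push $\mu(I)$ below $(3t-2)/t^3$.
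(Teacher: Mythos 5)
Your initial reductions are sound: a compressed set satisfies $\mu(I\cap J_{2,j}) \ge \frac{1}{t-1}\mu(I\setminus J_{1,j})$ (compression makes $\mu(I\cap J_{i,j})$ non-increasing in $i$), so Claim~\ref{claim:ind-set-gap-better} applies; and the supports $S_k$ of the maximal elements $x^{(k)}$ of a compressed independent set do form an intersecting family, with $I\subseteq\bigcup_k\bigcap_{i\in S_k}J_{1,i}$.

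The gap is the central combinatorial claim, namely that every non-star intersecting family $\mathcal{F}$ satisfies $\mu\bigl(\bigcup_{S\in\mathcal{F}}\bigcap_{i\in S}J_{1,i}\bigr)\le(3t-2)/t^3$. This is false. Take $\mathcal{F}=\{\{1,2\},\{1,3\},\{1,4\},\{1,5\},\{2,3,4,5\}\}$, a Hilton--Milner-type family which is intersecting but not a star. Define $x^{(k)}_i=1$ for $i\in S_k$ and $x^{(k)}_i=t$ otherwise; the down-closure $I=\bigcup_k\{y:y\le x^{(k)}\}$ is a compressed independent set, equal to $\bigcup_{S\in\mathcal{F}}\bigcap_{i\in S}J_{1,i}$, and
\begin{align*}
\mu(I)=\frac{1}{t}\left(1-\left(1-\frac{1}{t}\right)^4\right)+\frac{1}{t^4}-\frac{1}{t^5}
=\frac{3t-2}{t^3}+\frac{(t-1)^2(t-2)}{t^5}>\frac{3t-2}{t^3}
\end{align*}
for every $t\ge 3$. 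The lemma of course still holds for this $I$, via $j=1$: $\mu(I\setminus J_{1,1})=(t-1)/t^5<(t-1)/t^4$. This exposes the logical flaw: you have replaced the weak condition ``no single $J_{1,j}$ carries almost all of $I$'' (the negation of the lemma's conclusion) by the strictly stronger condition ``$\mathcal{F}$ is not a star,'' and the residual case --- non-star but still heavily concentrated on one dictator, exactly as above --- is not handled. The shifting you propose also fails concretely here: no $2$-subset of $\{2,3,4,5\}$ meets all of $\{1,2\},\dots,\{1,5\}$, so the family cannot be pushed down to a triangle; but more fundamentally no reduction can rescue a bound that is false.

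For comparison, the paper argues by induction on $n$. Assuming $\mu(I\setminus J_{1,j})>(2t-1)(t-1)/t^4$ for every $j$, it constructs a compressed independent set $\tilde{I}\subseteq[t]^{n-1}$ with $\mu(\tilde{I})\ge\mu(I)$, invokes the inductive hypothesis to find a coordinate with small residual for $\tilde{I}$, and then combines this with the isoperimetric profile $\Phi_t$ applied to slices like $I\cap J_{2,j}\cap J_{1,n}$ to reach a numerical contradiction. Your ``backup'' sketch is closer in spirit to this but is not developed into a proof.
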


Note that by Lemma~\ref{lem:click}, we immediately have that
Theorem~\ref{thm:tensor-stability-better} holds for compressed
independent sets.

\begin{proof} We prove this statement by induction on $n$. If $n = 1$,
  then the bound holds since $I = \{(1)\}$ which is clearly a
  maximum-sized independent set. Now assume $n \ge 2$ and that the
  (\ref{eq:compressed_claim}) holds for all compressed independent
  sets $I \subset \mathbb [t]^{n-1}$ with $\mu(I) >
  \frac{t-1}{t^3}$.

  Fix a compressed independent set $I \subseteq [t]^n$ with $\mu(I)
  \ge \frac{t-1}{t^3}$. From Claim~\ref{claim:ind-set-gap-better},
  if the lemma is false, then we have that for all $j \in [n]$,
  \[\mu(I \setminus J_{1,j}) > \frac{(2t-1)(t-1)}{t^4}.\]
  Since $I$ is compressed, this implies that for all such $j$
  \[\mu(I \cap J_{2,j}) > \frac{2t-1}{t^4}.\]

  Recall that for all $a \in [t]$, $I_a = \{(x_1, \hdots, x_{n-1})
  \mymid (x_1, \hdots, x_{n-1}, a) \in I\} \subseteq [t]^{n-1}.$ We claim that $I_2$ is an
  independent set of $K_t^{n-1}$. Note that in general $I_1$ is
  \emph{not} an independent set of $K_t^{n-1}$. Since $I$ is
  compressed, $I_2 \subseteq I_1$. Thus, if there were $x, y \in
  I_2$ which form an edge of $K_t^{n-1}$, then $(x, 1), (y, 2) \in I$
  form an edge of $K_t^n$, contradicting that $I$ is an independent
  set. Therefore, $I_2 \subseteq [t]^{n-1}$ is indeed an independent
  set.

  Note that $\mu(I_2) = t\mu(I \cap J_{2,n}) > \frac{2t-1}{t^3}$ which
  is not sharp enough of a lower bound to invoke the inductive
  hypothesis. But, we claim that we can find a compressed independent set $\tilde{I}
  \subseteq I_1$ such that $\mu(\tilde{I}) \ge \mu(I) > \frac{3t-2}{t^3}$.

  Pick $a \in [t]$ such that $(I_1 \setminus I_2) \cap J_{a,
    n-1}\subseteq [t]^{n-1}$ has maximal size.\footnote{To keep
    notation as concise as possible, we use the $J_{i,j}$ notation to
    refer to both the maximal independent sets of $[t]^{n-1}$ and
    $[t]^n$. It should be clear from context which we are referring
    to.} Note that since $I_1 \setminus I_2$ is not necessarily
  compressed, $a$ might not equal $1$.  Let $\hat{I} = I_2 \cup ((I_1
  \setminus I_2) \cap J_{a, n-1})$. We claim that $I$ is an
  independent set (although it might not be compressed). As previously
  established $I_2$ is an independent set and clearly $(I_1 \setminus
  I_2) \cap J_{a, n-1}$ is an independent set since the last
  coordinate is constant. Thus, if $I$ were not an independent set
  then, there is $x \in I_2$ and $y \in I_1 \setminus I_2$ which are
  connected by an edge in $K_t^{n-1}$. But, note that $(x, 2), (y, 1)
  \in I$ are connected by an edge in $K_t^n$, contradiction. Thus,
  $\hat{I}$ is an independent set of $K^{n-1}_t$.

  Let $\tilde{I}$ be a compression of $\hat{I}$. since $I_2$ and $I_1$ are
  already compressed and $I_2 \subseteq \hat{I} \subseteq I_1$, we
  have that $I_2 \subseteq \tilde{I} \subseteq I_1$. Now,
  \begin{align*}
  \mu(\tilde{I}) &= \mu(\hat{I})\\
  &\ge \mu(I_2) + \frac{\mu(I_1) -
    \mu(I_2)}{t}\\
   &= \frac{\mu(I_1) + (t-1)\mu(I_2)}{t}\\
   &\ge \frac{1}{t}\sum_{i=1}^t \mu(I_i)\\
   &= \mu(I) > \frac{3t-2}{t^3}.
  \end{align*}

  Thus, we may now invoke the induction hypothesis on $\tilde{I}$. Therefore,
  there exists $j \in [n-1]$ such that
  \[
  \mu(\tilde{I} \setminus J_{1,j}) < \frac{t-1}{t^4}.
  \]
  Since $I_2 \subseteq \tilde{I}$, we have that 
  \[
  \mu(I_2 \setminus J_{1,j}) \le \mu(\tilde{I} \setminus J_{1,j}) < \frac{t-1}{t^4}.
  \]
  Therefore, since $I$ is compressed
  \begin{align}
  \mu(I \setminus (J_{1,j} \cup J_{1,n})) &= \frac{1}{t}\sum_{i=2}^n
  \mu(I_i \setminus J_{1,j})\\
  &\le \frac{t-1}{t}\mu(I_2 \setminus J_{1,j})\\
  &\le \frac{(t-1)^2}{t^5}.
  \end{align}
  Hence, recalling that $I$ is very far from $J_{1,n}$
  \begin{align}
    \mu((I \setminus J_{1,n}) \cap J_{1,j}) &= \mu(I \setminus J_{1,n}) -
    \mu(I \setminus (J_{1,j} \cup J_{1,n}))\\
    &\ge \frac{(2t-1)(t-1)}{t^4} - \frac{(t-1)^2}{t^5} = \frac{(2t^2 - 2t + 1)(t-1)}{t^5}.
  \end{align}
  Likewise,
  \begin{align}
    \mu((I \setminus J_{1,j}) \cap J_{1,n}) &= \mu(I \setminus J_{1,j}) -
    \mu(I \setminus (J_{1,j} \cup J_{1,n}))\\
    &\ge \frac{(2t-1)(t-1)}{t^4} - \frac{(t-1)^2}{t^5} = \frac{(2t^2 - 2t + 1)(t-1)}{t^5}.
  \end{align}

Let $I' = I \cap J_{2,j} \cap J_{1,n}$ and $I'' = I \cap J_{1,j} \cap
J_{2,n}$. Now observe that since $I$ is compressed
\begin{align*}
\mu(I') = \mu(I \cap J_{2,j}\cap J_{1,n}) &\ge \frac{1}{t-1}\mu((I \setminus J_{1,j})
\cap J_{1,n})= \frac{2t^2 - 2t + 1}{t^5}.
\end{align*}
Similarly,
\begin{align*}
\mu(I'') = \mu(I \cap J_{1,j}\cap J_{2,n}) &\ge \frac{1}{t-1}\mu((I \setminus J_{1,n})
\cap J_{1,j})= \frac{2t^2 - 2t + 1}{t^5}.
\end{align*}

Since $I'$ is constant in both the $j$th and $n$th coordinates,
\[
\mu(\partial I' \cap J_{1,j} \cap J_{2,n}) =
\frac{1}{(t-1)^2}\mu(\partial I') \ge \frac{1}{(t-1)^2}\Phi_t(\mu(I')).
\]

From Theorem~\ref{lem:bound-D-exact}, we have that
  \begin{align*}\Phi_t(\mu(I')) &\ge \Phi_t\left(\frac{1}{t^3} +
\frac{(t-1)^2}{t^5}\right)\\
&= \frac{(t-1)^2}{t^2}\left(\frac{t-1}{t} +
\frac{1}{t}\Phi_t\left(\frac{t-1}{t^2}\right)\right)\\
&\ge \frac{(t-1)^3}{t^3}
  \end{align*}
  since $\Phi_t(\nu) \ge 0$.  Therefore, since $I' \cup
  I''$ is an independent set
  \begin{align*} \frac{1}{t^2} &= \mu(J_{1,j} \cap J_{2,n})\\
    &\ge \mu(I'') + \mu(\partial I'\cap
J_{1,j} \cap J_{2,n})\\ &\ge \frac{2t^2 - 2t + 1}{t^5} +
\frac{1}{(t-1)^2}\Phi_t(\mu(I'))\\ &\ge \frac{2t^2 - 2t + 1}{t^5} +
\frac{t-1}{t^3}\\&= \frac{t^3 + t^2 - 2t + 1}{t^5} >
\frac{1}{t^2},\text{ (since $t \ge 3$)}
  \end{align*} contradiction. Thus, the lemma is true. 
\end{proof}

Now we extend this result to sorted independent sets; and thus all
independent sets. 

\begin{lem}\label{lem:ind-set-collapse-full} Let $I \subset [t]^n$ be
  a sorted independent set such that $\mu(I) > \frac{3t - 2}{t^3}$,
  then for some $j \in [n]$,
  \begin{align}
  \mu(I \setminus J_{1,j}) <
  \frac{t-1}{t^4}.\label{eq:win}
  \end{align}
\end{lem}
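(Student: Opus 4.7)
The plan is to reduce the sorted case to the compressed case already handled by Lemma~\ref{lem:ind-set-collapse} by applying compressions, and then transfer the conclusion back using the dichotomy of Claim~\ref{claim:ind-set-gap-better}.

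First, I would apply a sequence of compressions $c_{i_1}, c_{i_2}, \ldots$ to $I$, obtaining intermediate independent sets $I = I^{(0)}, I^{(1)}, \ldots, I^{(M)} = I^*$ with $I^*$ compressed in every direction. Remark~\ref{rem:compressed} guarantees termination, Claim~\ref{claim:c-ind} preserves independence, and each compression preserves $\mu(I)$. Crucially, every intermediate set remains sorted: a compression $c_i$ sorts the $i$-axis while leaving each marginal $\mu(\cdot \cap J_{k,j})$ with $j \ne i$ invariant, so sortedness in coordinates $\ne i$ is inherited and sortedness in coordinate $i$ is acquired. Hence Claim~\ref{claim:ind-set-gap-better} applies at every step. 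Since $\mu(I^*) = \mu(I) > \frac{3t-2}{t^3}$, Lemma~\ref{lem:ind-set-collapse} yields some $j^* \in [n]$ with $\mu(I^* \setminus J_{1, j^*}) < \frac{t-1}{t^4}$.

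Next, I would argue that this $j^*$ works for $I$ as well. Suppose for contradiction $\mu(I \setminus J_{1, j^*}) \ge \frac{t-1}{t^4}$; Claim~\ref{claim:ind-set-gap-better} then strengthens this to $\mu(I \setminus J_{1, j^*}) > \frac{(2t-1)(t-1)}{t^4}$. The sequence $\mu(I^{(k)} \setminus J_{1, j^*})$ is monotonically non-increasing, starts above $\frac{(2t-1)(t-1)}{t^4}$, and ends below $\frac{t-1}{t^4}$. Since every intermediate value lies in one of the two disjoint ranges permitted by the dichotomy, the gap must be traversed in a single step $k_0$. Only compressions in direction $j^*$ affect $\mu(\cdot \setminus J_{1, j^*})$, so the step $k_0$ is some $c_{j^*}$. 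Let $S := I^{(k_0 - 1)}$; then $S$ is sorted, independent, has density $> \frac{3t-2}{t^3}$, satisfies $\mu(S \setminus J_{1, j^*}) > \frac{(2t-1)(t-1)}{t^4}$, yet $\mu(c_{j^*}(S) \setminus J_{1, j^*}) < \frac{t-1}{t^4}$.

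The main obstacle is ruling out such a large single-step jump. The compression $c_{j^*}$ inserts into $J_{1, j^*}$ exactly those elements whose $(n-1)$-dimensional fiber in direction $j^*$ meets $S$ but currently has no representative in the $x_{j^*}=1$ layer; by assumption the number of such newly-added elements must exceed $\frac{2(t-1)^2}{t^3} \cdot t^{n-1}$. To derive a contradiction, I would combine the sortedness inequality (which forces the $x_{j^*}=1$ slice of $S$ to be at least as large as each $x_{j^*}=k$ slice for $k \ge 2$) with the cross-intersecting property that independence of $S$ imposes on the different $j^*$-slices (any element in the $x_{j^*}=k$ slice must agree in some coordinate of $[n] \setminus \{j^*\}$ with every element in the $x_{j^*}=k'$ slice, for $k \ne k'$). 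These constraints should trap enough of the fiber support of $S$ inside the $x_{j^*}=1$ slice to contradict the required jump size. Unlike the compressed case of Lemma~\ref{lem:ind-set-collapse}, we no longer have a subset relation among slices ($I_2 \subseteq I_1$); only the weaker cross-intersection is available, so the extremal accounting must parallel the boundary argument used in Claim~\ref{claim:ind-set-gap} rather than being a direct inclusion.
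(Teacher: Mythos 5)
Your reduction scaffolding is sound and is essentially the contrapositive of the paper's: you compress $I$ to $I^*$, invoke Lemma~\ref{lem:ind-set-collapse} to find a good $j^*$ for $I^*$, and then transfer back to $I$; the paper instead assumes the conclusion fails for $I$, shows each $c_j$ preserves the ``far from all $J_{1,j}$'' property, and then compresses to contradict Lemma~\ref{lem:ind-set-collapse}. Your observation that compressions preserve sortedness is correct (for $j\neq i$, $c_i$ leaves each marginal $\mu(\cdot\cap J_{k,j})$ fixed, and it monotonizes coordinate $i$), and you have correctly localized the entire difficulty to a single claim: no single compression $c_{j^*}$ applied to a sorted independent set of density $>\frac{3t-2}{t^3}$ can carry $\mu(\cdot\setminus J_{1,j^*})$ from above $\frac{(2t-1)(t-1)}{t^4}$ to below $\frac{t-1}{t^4}$.

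The gap is that you never prove this ``no single-step jump'' claim. Your final paragraph is a plan, not an argument: you say the sortedness and cross-intersection constraints ``\emph{should} trap enough of the fiber support'' and that ``the extremal accounting must parallel the boundary argument used in Claim~\ref{claim:ind-set-gap},'' but you do not carry it out. This is precisely where the paper does the bulk of its work: it uses that $\mu(I\cap J_{2,j^*}) > \frac{2t-1}{t^4}$ (from sortedness) to lower-bound $\mu(\partial(I\cap J_{2,j^*}))$ via $\Phi_t$, exploits that $I$ is independent so this boundary is disjoint from $I$, deduces a quantitative lower bound on $\mu(J_{1,j^*}\setminus c_{j^*}(I))$, combines this with the hypothesis $\mu(c_{j^*}(I)\setminus J_{1,j^*}) < \frac{t-1}{t^4}$ to bound $\mu(I)$ from above, and then runs a three-case analysis on $\nu = \mu(I\cap J_{2,j^*})$ using the exact recursion of Theorem~\ref{lem:bound-D-exact} to reach a contradiction. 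None of this casework, nor the invocation of $\Phi_t$, appears in your sketch. Without it, the proof is incomplete at its load-bearing step.
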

\begin{proof} Like in the proof of Lemma~\ref{lem:ind-set-collapse},
  by Claim~\ref{claim:ind-set-gap-better}, we may assume for sake of
  contradiction that for all $j \in [n]$,
  \[\mu(I \setminus J_{1,j}) >
  \frac{(2t-1)(t-1)}{t^4}.\]
  It is not hard to see that for all $i, j \in [n]$ such that $i \neq
  j$,
  \begin{align}
    \mu(c_i(I) \setminus J_{1,j}) = \mu(I \setminus J_{1,j}) > \frac{(2t-1)(t-1)}{t^4}.\label{eq:comp_triv}
  \end{align}
  We seek to show that for all $j \in [n]$,
  \begin{align}
  \mu(c_j(I) \setminus J_{1,j}) >
  \frac{(2t-1)(t-1)}{t^4}. \label{eq:comp_nontriv}
  \end{align}

  By Claim~\ref{claim:ind-set-gap-better}, assume for sake of
  contradiction that \begin{align}\mu(c_j(I) \setminus J_{1,j}) < \frac{t-1}{t^4}\label{eq:325}\end{align} for
  some $j \in [n]$. We may assume without loss of generality that $j =
  n$.  Since $I$ is sorted, 
  \begin{align}\mu(I \cap J_{2,n}) \ge \frac{1}{t-1}\mu(I \setminus J_{1,n}) > 
  \frac{2t-1}{t^4}.\label{eq:7}\end{align}

  Therefore,
  \[
  \mu(\partial (I \cap J_{2,n})) \ge \Phi_t\left(\frac{2t-1}{t^4}\right) =
  \frac{(t+1)(t-1)^3}{t^4}.
  \]
  This implies that
  \[
  \mu(\partial(I \cap J_{2,n}) \cap J_{1,n}) = \frac{1}{t-1}\mu(\partial(I \cap J_{2,n}))
  = \frac{(t+1)(t-1)^2}{t^4}.
  \]
  Observe that since $I$ is an independent set \[\mu(\partial(I \cap
  J_{2,n}) \cap I) = 0.\] Therefore, if $x \in \partial(I \cap
  J_{2,n}) \cap c_n(I)$, then $(x_1, \hdots, x_{n-1}, 1) \in I$
  (because any other choice for the last coordinate would violate the
  above relation). Therefore, \begin{align}\mu(\partial(I \cap J_{2,n}) \cap
  J_{1,n} \cap c_n(I)) \le \mu(I \cap J_{2,n}).\label{eq:5}\end{align} 
  From this, we get that
  \begin{align}
  \mu(J_{1,n} \setminus c_n(I)) &\ge \mu((\partial(I \cap J_{2,n}) \cap
  J_{1,n}) \setminus c_n(I))\\&= \mu((\partial(I \cap J_{2,n}) \cap
  J_{1,n}) ) - \mu((\partial(I \cap J_{2,n}) \cap
  J_{1,n}) \cap c_n(I))\\
  &\ge \mu((\partial(I \cap J_{2,n}) \cap
  J_{1,n}) ) - \mu(I \cap J_{2,n})\text{ (by (\ref{eq:5}))}\label{eq:678}
  \end{align}

  Next, we deduce
  \begin{align} \mu (I) &= \mu(c_n(I) \cap J_{1,n}) + \mu(c_n(I)
\setminus J_{1,n})\\ &< \frac{1}{t} - \mu(J_{1,n}\setminus c_n(I)) +
\frac{t-1}{t^4}\text{ (by (\ref{eq:325}))}\\ &\le \frac{1}{t} - (\mu(\partial(I\cap J_{2,n}) \cap
J_{1,n}) - \mu(I \cap J_{2,n})) + \frac{t-1}{t^4}\text{ (by (\ref{eq:678}))}\label{eq:99}
  \end{align}

  Let $\nu := \mu(I \cap J_{2,n})$. Then note that
\[\mu(\partial(I \cap J_{2,n}) \cap J_{1,n}) = \frac{1}{t-1}\mu(\partial(I
\cap J_{2,n})) \ge \frac{1}{t-1}\Phi_t(\nu).\] Thus, by (\ref{eq:99})
  \begin{align} \mu(I) < \frac{t^3 + t - 1}{t^4} -
\left(\frac{1}{t-1}\Phi_t(\nu) - \nu\right).\label{eq:67}
  \end{align}

  We divide the remainder of the proof into three
cases depending on the value of $\nu$.

  Case\footnote{Recall that $\nu > \frac{2t-1}{t^4}$ by
    (\ref{eq:7})} 1: $\frac{2t-1}{t^4} < \nu \le \frac{1}{t^2}$.   By
  Theorem~\ref{lem:bound-D-exact} and the fact that $\Phi_t(\rho) \ge
  \rho$ for all $\rho \in [0, 1]$,
  \begin{align*} \Phi_t(\nu) &= \frac{(t-1)^2}{t^2}\left(\frac{t-1}{t} +
\frac{1}{t}\Phi_t\left(\frac{t^3\nu - 1}{t-1}\right)\right)\\ &=
\frac{(t-1)^2}{t^2}\left(\frac{t-1}{t} +
\frac{1}{t}\left(\frac{t-1}{t} + \frac{1}{t}\Phi_t\left(\frac{t^4\nu -
(2t-1)}{(t-1)^2}\right)\right)\right)\\ &\ge \frac{(t-1)^3(t+1) +
t^4\nu - (2t-1)}{t^4}.
  \end{align*} Thus, by (\ref{eq:67})
  \[ \frac{3t - 2}{t^3} < \mu(I) < \frac{t^3 + t - 1}{t^4} -
\frac{1}{t-1}\cdot \frac{(t-1)^3(t+1) + t^4\nu - (2t-1)}{t^4} + \nu.
  \] Rearranging,
  \[
  \frac{(2t-1) + 2(t-1)^3}{(t-1)t^4} \le
  \left(\frac{t-1}{t-2}\right)\nu \le \frac{t-2}{t^2(t-1)}.
  \]
  This implies that
  \[ 2(t-1)^3 + 2t-1 < t^3 - 2t^2.
  \] Thus, $t^3 - 4t^2 + 8t - 2 < 0$, but this is false for $t \ge 3$,
contradiction.

  Case 2, $\frac{1}{t^2} < \nu \le \frac{(2t-1)(t-1)}{t^4}$.

  Then $\Phi_t(\nu) \ge
\frac{(t-1)^2}{t^2}$. Thus, by (\ref{eq:67})
  \[ \mu(I) < \frac{t^3 + t - 1}{t^4} - \frac{t-1}{t^2} +
\frac{(2t-1)(t-1)}{t^4} = \frac{3t^2 - 2t}{t^4} = \frac{3t -2}{t^4} < \mu(I),
  \] contradiction.
  
  Case 3, $\nu > \frac{(2t-1)(t-1)}{t^4}$.

  Observe that \begin{align*} \Phi_t(\nu) \ge
\Phi_t\left(\frac{2t^2 - 3t + 1}{t^4}\right) &=
\frac{t-1}{t}\Phi_t\left(\frac{2t^2 - 3t + 1}{t^3}\right)\\&= \frac{(t-1)^2}{t^2} +
\frac{t-1}{t^2}\Phi_t\left(\frac{t^2-3t+1}{t(t-1)}\right)\\ &\ge
\frac{t(t-1)^2 + (t^2-3t+1)}{t^3}\\ &> \frac{t^2(t-1)^2 +
(t-1)(t^2-3t+1)}{t^4}\text{ (since $t \ge 3$)}\\
&= \frac{(t-1)(t^3 - 3t + 1)}{t^4}.
  \end{align*}
  Since $I$ is sorted, $\mu(I) \ge 2\nu$. Therefore,
  \[ 2\nu \le \mu(I) < \frac{t^3 + t -1}{t^4} - \frac{t^3 - 3t +
1}{t^4} + \nu.
  \] Thus, $\nu < \frac{4t - 2}{t^4}$, but $\frac{4t-2}{t^4} \le \frac{(2t-1)(t-1)}{t^4}$
for $t \ge 3$, contradiction.

 End Cases.

  Therefore, our assumption that~(\ref{eq:comp_nontriv}) failed to
  hold is false. Therefore
  \[\mu(c_j(I) \setminus J_{1,i}) > \frac{(2t-1)(t-1)}{t^4}.\] for all $i,j
  \in [n]$. Applying this fact repeatedly, we can find a compressed
  $I'$ of the same cardinality as $I$ such that $\mu(I'
  \setminus J_{1,i}) > \frac{(2t-1)(t-1)}{t^4}$ for all $i \in [n]$,
  contradicting Lemma \ref{lem:ind-set-collapse}. Thus, our
  counterexample $I$ could have
  never existed. This proves the Lemma.
\end{proof}

\begin{proof}[Proof of Theorem~\ref{thm:tensor-stability-better}]
Let $I \subset [t]^n$ be an independent set with $\mu(I) > \frac{3t-2}{t^3}$. Assume without loss of
generality that $I$ is sorted. By
Lemma~\ref{lem:ind-set-collapse-full}, we know that there is $j \in
[n]$ such that
\[
\mu(I \setminus J_{1,j}) \le \frac{t-1}{t^4} < \frac{1}{t^3}.
\]
Thus, by Lemma~\ref{lem:click}, we have that
\[
\mu(I \setminus J_{1,j}) \le 4\epsilon^{\eta(t)},
\]
as desired.
\end{proof}

\section*{Acknowledgments}\label{sec:ack}

The author is indebted to Venkatesan Guruswami for numerous insightful
discussions and comments, in particular for pointing the author to
\cite{Alon}.

The author would also like to thank Boris Bukh and Po-Shen Loh for
helpful comments and discussions.

The 2D plots were created using Matplotlib~\cite{Hunter2007}. The 3D
visualizations were created using Asymptote~\cite{hammerlindl2014asymptote}.

\bibliographystyle{alpha} \bibliography{../bib/jabref}

\appendix

\section{Proofs of algebraic inequalities}\label{app:ineq}

\begin{proof}[\textbf{Proof of Claim~\ref{claim:bound-D-ineq}}] For $c
  \ge 0$, let $f_c(z) = (z+c)^{\alpha(t)} - z^{\alpha(t)}$. Notice
  that if $z >0$, then $f'_c(z) =
  (\alpha(t))((z+c)^{\alpha(t)-1}-z^{\alpha(t)-1}) \le 0$. Thus, we
  have that $(t-1)f_c(y) \ge (t-1)f_c(x)$ for all $c \ge 0$. Consider
  $c =(t-1)y$; we then have that
\begin{align*} (t-1)f_c(y) &= (t-1)((ty)^{\alpha(t)} -
y^{\alpha(t)}) = (t-1)(t^{\alpha(t)} - 1)y^{\alpha(t)} = y^{\alpha(t)}
\ge\\ (t-1)f_c(x) &= (t-1)((x + (t-1)y)^{\alpha(t)} - x^{\alpha(t)}).
\end{align*} Rearranging, we obtain (\ref{eq:bound-D-ineq}).
\end{proof}

\begin{proof}[\textbf{Proof of Claim~\ref{claim:close-ind-ineq}.}]
  First, verify the cases $t = 3$ and $t = 4$ using a calculator. Notice that
  $\eta(t) = \frac{\log t}{\log t - \log(t-1)}\le t\log t$ so
  \[
  \left(\frac{(t-1)^3}{(t-1)^3 - t}\right)^{\eta(t)} \le
  e^{\frac{t\eta(t)}{(t-1)^3 - t}} \le e^{\frac{t^2\log t}{(t-1)^3 - t}}.
  \]
  Also use a calculator to verify that $h(t) := \frac{t^2\log
    t}{(t-1)^3 - t}$ is less than $1$ for $t = 5$. Now observe that
  when going from $t$ to $t+1$, the numerator increases by
  \begin{align*}
  (t+1)^2\log(t+1) - t^2\log t &= (2t+1)\log(t+1) + t^2\log(1 +
  \frac{1}{t})\\
  &\le (2t+1)\log(t+1) + t \le (2t+1)t + t\\
  &= 2t^2 + 2t.
  \end{align*}
  and the denominator increases by
  \[
  t^3 - (t+1) - (t-1)^3 + t = 3t^2 - 3t
  \]
  Since $2t^2 + 2t \le 3t^2 - 3t$ for all $t \ge 5$ and $h(5) \le 1,$
  we have by a simple inductive proof that $h(t) \le 1$ for all $t \ge
  5$. Thus, for all $t \ge 5$,
  \[
  \left(\frac{(t-1)^3}{(t-1)^3 - t}\right)^{\eta(t)} \le e^{1} < 4,
  \]
  as desired.
\end{proof}

\section{Proof of Theorem~\ref{lem:bound-D-exact}}\label{app:profile}

The first step in proving this theorem is to determine the structure
of $S$ when $\mu(S)$ is fixed but $\mu(\partial S)$ is minimized. In
particular, we need $S$ to look as much like a maximal independent set
(e.g., $J = [t]^{n-1} \times [1]$) as possible.

\begin{claim}\label{claim:pack1} Let $t \ge 3$ and $n$ be positive
  integers. Let $J$ be a maximum-sized independent set. Consider
  $S\subseteq \mathbb [t]^n$.
  \begin{enumerate}
  \item If $\mu(S) < \frac{1}{t}$, then there exists $S' \subset
    [t]^n$ such that $\mu(S') = \mu(S)$, $\mu(\partial S') \le
    \mu(\partial S)$, and $S' \subset J$.

  \item If $\mu(S) \ge \frac{1}{t}$, then there exists $S' \subset [t]^n$ such
    that $\mu(S') = \mu(S)$, $\mu(\partial S') \le
    \mu(\partial S)$, and $J \subseteq S'$.
  \end{enumerate}
\end{claim}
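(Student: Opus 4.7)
My plan is to attack both parts by induction on $n$, first applying Claim~\ref{claim:c-D} to assume that $S$ is already compressed, since compressions preserve $|S|$ and do not increase $|\partial S|$. Without loss of generality I would take $J = J_{1,n} = [t]^{n-1}\times\{1\}$, and let $S_i := \{x \in [t]^{n-1} : (x,i) \in S\}$ denote the slices of $S$ in the last coordinate. Compression gives a chain $S_1 \supseteq S_2 \supseteq \cdots \supseteq S_t$, and the slice analysis from the proof of Lemma~\ref{lem:bound-D} yields $(\partial S)_1 = \partial S_2$ and $(\partial S)_i = \partial S_1$ for $i \ge 2$, so $t\,\mu(\partial S) = \mu(\partial S_2) + (t-1)\mu(\partial S_1)$, where the latter two densities are computed in $[t]^{n-1}$.

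For part 1, $\mu(S) < 1/t$ forces $\sum_i \mu(S_i) = t\mu(S) < 1$ in $[t]^{n-1}$, so there is room to fit a copy of $S$ into a single slice. I would set $S' = T \times \{1\}$ for a carefully chosen $T \subseteq [t]^{n-1}$ with $|T| = |S|$ and $T \supseteq S_1$; then $\mu(\partial S') = \tfrac{t-1}{t}\mu(\partial T)$, and the goal reduces to the inequality $\mu(\partial T) \le \mu(\partial S_1) + \tfrac{1}{t-1}\mu(\partial S_2)$. I would build $T$ by starting from $S_1$ and iteratively adding points of $[t]^{n-1} \setminus S_1$ that already lie in $\partial S_1$ — so adding them does not enlarge $\partial T$ beyond $\partial S_1$ — and control the (hopefully rare) points added outside $\partial S_1$ by matching them to mass already recorded in $\partial S_2$.

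For part 2, $\mu(S) \ge 1/t$ forces $\sum_i \mu(S_i) \ge 1$, so there is enough total mass to enlarge $S_1$ to all of $[t]^{n-1}$. I would define $S'$ by first setting its slice~$1$ equal to $[t]^{n-1}$ and then distributing the remaining $|S|-t^{n-1}$ elements as a compressed chain $S'_2 \supseteq \cdots \supseteq S'_t$, where each $S'_i$ is obtained by applying part~1 inductively to the residual mass in $[t]^{n-1}$; the boundary bound then follows from a dual slice computation.

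The main obstacle I anticipate is the precise construction of $T$ in part~1 and the verification of the boundary inequality $\mu(\partial T) \le \mu(\partial S_1) + \tfrac{1}{t-1}\mu(\partial S_2)$. My plan is a strong induction on $n$ combined with an exchange argument: whenever a prospective point to be added to $T$ does not lie in $\partial S_1$, I would use compressedness to swap it with a suitable point of $S_1 \cap \partial S_2$, keeping $T$ compressed throughout. If this direct exchange turns out to be insufficient, I would fall back on a simultaneous induction together with Theorem~\ref{lem:bound-D-exact}; such interplay is to be expected, since this claim is precisely the structural input one needs to unlock the recursive step of that theorem.
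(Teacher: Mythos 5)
Your high-level reduction (compress, slice in the last coordinate, try to consolidate into one slice) is a reasonable first instinct, but the core step of your construction is wrong, and the fallback you suggest is circular. The paper's own proof takes a genuinely different route.

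The concrete error is in the claim that ``iteratively adding points of $[t]^{n-1}\setminus S_1$ that already lie in $\partial S_1$\ldots{}does not enlarge $\partial T$ beyond $\partial S_1$.'' Adding $p$ to $T$ changes the boundary to $\partial T\cup\partial\{p\}$, and $p\in\partial S_1$ does not imply $\partial\{p\}\subseteq\partial S_1$: the boundary operator is about the \emph{neighbours} of $p$, not about $p$ itself. For example with $t=3$, $n-1=2$, $S_1=\{(1,1)\}$, we have $\partial S_1=\{2,3\}^2$; taking $p=(2,2)\in\partial S_1$ gives $\partial\{p\}=\{(1,1),(1,3),(3,1),(3,3)\}$, and three of these four points lie outside $\partial S_1$. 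So the boundary strictly grows. The subsequent ``exchange with a point of $S_1\cap\partial S_2$'' is also not specified in a way that would control the boundary, and $S_1\cap\partial S_2$ may be empty. Finally, your proposed fallback of invoking Theorem~\ref{lem:bound-D-exact} is circular: that theorem is \emph{proved from} Claim~\ref{claim:pack1}, so you cannot use it here.

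The paper's argument avoids any element-by-element construction. After reducing to compressed $S$, it passes to the Boolean cube via $\Pi:[t]^n\to\{0,1\}^n$ with $\Pi(x)_i=\mathbf 1[x_i\neq 1]$, observes that for compressed $S$ the boundary density is a function of $\Pi(S)$ alone (equation (\ref{eq:mu_mon})), and then replaces $S$ by a sequence of structure-preserving operations on $\Pi(S)$: a \emph{filling} operation $\Fill$ that replaces $S$ by $\Pi^{-1}(\Pi(S))$ (keeping $\mu(\partial S)$ fixed while only increasing $\mu(S)$), and a family of \emph{folding} operations $\Fold_A$ for $A\subseteq[n-1]$, which are shifts in the Boolean lattice that push $\Pi(S)$ toward or away from the hyperplane $z_n=0$ (i.e.\ toward $J$ or its complement). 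Claim~\ref{claim:fold} establishes that when the folds are applied in an appropriate order they preserve compressedness, hence preserve the formula~(\ref{eq:mu_mon}) and so never increase $\mu(\partial S)$ while never decreasing $\mu(S)$. Iterating to a fixed point yields $S'$ with $S'\subseteq J$ or $J\subseteq S'$, after which one trims $S'$ back to the original size. If you want to salvage a slice-based approach, you would essentially have to reinvent these folding shifts; a direct greedy construction of $T$ does not work.
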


 For each $x \in S$, define $|x|$, the \textit{level} of $x$,
be the number of coordinates of $x$ not equal to $1$ (c.f.,
\cite{Alon}).

\begin{proof} Without loss of generality, assume that $J = [t]^{n-1}
  \times [1]$. By Claim \ref{claim:c-D}, we may assume that $S$ is
compressed. This immediately resolves the case $n = 1$, so we may
assume $n \ge 2$.

Consider the map $\Pi : \mathbb [t]^n \to \{0, 1\}^n$
such that
\begin{align}
\Pi(x)_i := \begin{cases}0 & x_i = 1\\ 1 & x_i \neq 1\end{cases}\text{
  for all $i \in [n]$.}
\end{align}
Let $f_S := \mathbf 1_{\Pi(S)} : \{0,1\}^n \to \{0, 1\}$
be the indicator function of $\Pi(S)$. Since $S$ is compressed, $f_S$ is a
\emph{monotone} Boolean function: $f_S(x) \le f_S(y)$ whenever $x_i \le
y_i$ for all $i \in [n]$.

For all $z \in \{0, 1\}^n$, let $\neg z$ denote the bitwise complement
of $z$. Note that for any $x \in \Pi^{-1}(z)$ and $y \in \Pi^{-1}(z)$,
$x$ and $y$ are connected by an edge in $K_t^n$. Therefore, because $S$ is
compressed
\begin{align}
\partial S = \bigcup_{z \in \Pi(S)}\Pi^{-1}(\neg z)
\end{align}
and so
\begin{align} \mu(\partial S) = \frac{1}{t^n}\sum_{z \in \Pi(S)} |\Pi^{-1}(\neg z)| = \frac{1}{t^n}\sum_{z \in
\Pi(S)}(t-1)^{n-|z|}. \label{eq:mu_mon}
\end{align}

  We now describe an algorithm which modifies $S$ into a compressed $S'$
such that $\mu(S' \cap J)$ is maximized while keeping $\mu(\partial S') \le
\mu(\partial S)$ and $\mu(S) \le \mu(S')$. This algorithm consists of
two subroutines.

    \begin{figure}
      \begin{center}
        \includegraphics[width = 2.5in]{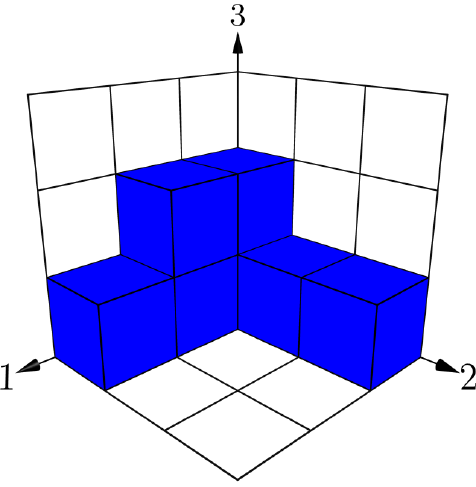}\hspace{1in}
        \includegraphics[width = 2.5in]{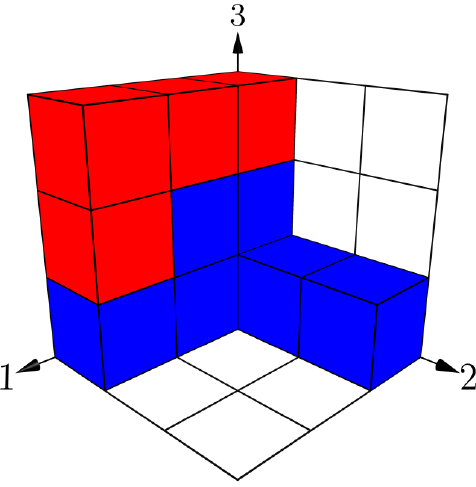}
      \end{center}
      \caption{A visualization of the operation $\Fill(S)$ when $n = t =
        3$. Each cube represents an element of $S$, with the red cubes
        being the ones that are changed. Each axis label represents a
        coordinate. For example, the red cube in the upper-left-hand corner
        represents the vertex $(3, 1, 3)$ of $K_3^3$.}
      \label{fig:3_fill}
    \end{figure}

    \textit{Filling.} See Figure~\ref{fig:3_fill}.
    Let 
    \[
    \Fill(S) = \bigcup_{z \in \Pi(S)} \Pi^{-1}(z).
    \]
    Note that $S \subseteq \Fill(S)$ but $\Pi(S) = \Pi(\Fill(S))$, so
    $\mu(\partial(\Fill(S))) = \mu(\partial S)$ by (\ref{eq:mu_mon}).

    Note that $\Fill(S)$ is compressed since $\mathbf 1_{\Pi(S)}$ is monotone.

    \begin{figure}
      \begin{center}
        \includegraphics[width = 2.5in]{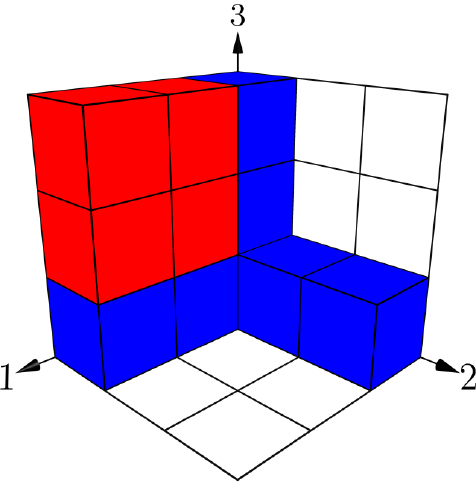}\hspace{1in}
        \includegraphics[width = 2.5in]{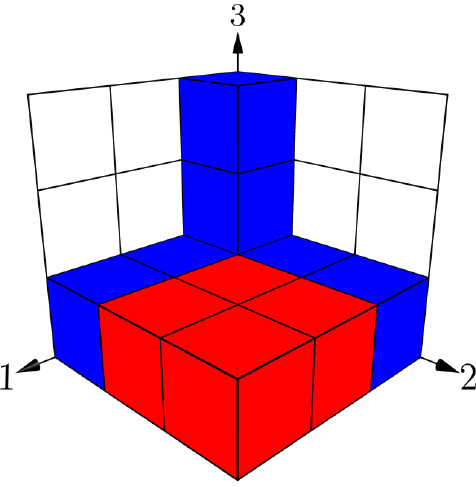}
      \end{center}
      \caption{A visualization of the operation $\Fold_{\{2\}}(S)$ when $n =
        t = 3$. See the caption for Figure~\ref{fig:3_fill} on
        interpreting this visualization.}
      \label{fig:3_fold}
    \end{figure}

        \begin{figure}
      \begin{center}
        \includegraphics[width = 2.5in]{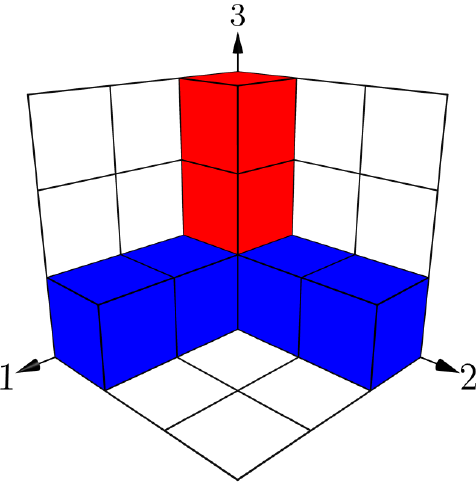}\hspace{1in}
        \includegraphics[width = 2.5in]{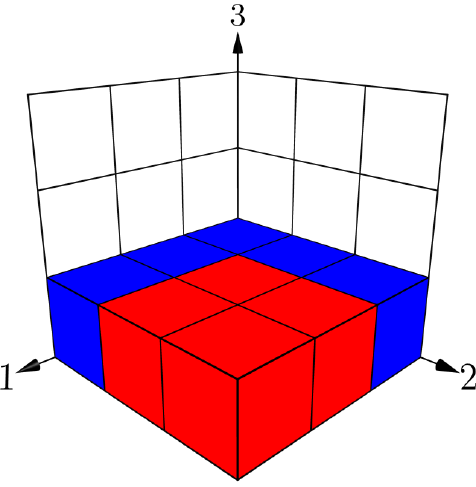}
      \end{center}
      \caption{A visualization of the operation $\Fold_{\{1,2\}}(S)$ when $n = t
        = 3$. Note that $\mu(S) < \mu(\Fold_{\{1,2\}}(S))$. See the caption for Figure~\ref{fig:3_fill} on
        interpreting this visualization.}
      \label{fig:3_flat}
    \end{figure}

    \textit{Folding.}\footnote{Note that this Folding operation is
      considered another form of compression in the literature,
      although typically used for Kneser graphs. For example see
      \url{https://gilkalai.wordpress.com/2008/10/06/extremal-combinatorics-iv-shifting/}}
    Assume that $S = \Fill(S).$ That is, for each $z \in \Pi(S)$,
    $\Pi^{-1}(z) \subseteq S.$

    The operator $\Fold_A$ is defined for each subset $A \subseteq
    [n-1]$. 

    For each $B \subseteq [n]$ let $\sigma_B : \mathbb \{0, 1\}^n \to
    \mathbb \{0, 1\}^n$ be the operator which negates the elements
    indexed by $B$
    \[
    \sigma_B(x)_i = \begin{cases}\neg x_i & i \in B\\ x_i & i \not\in B\end{cases}.
    \]
    For any $A \subseteq [n-1]$ let
    \begin{align}
    F_A = \{x \in \Pi(S) : x_A = 0, x_n = 1, \sigma_{A \cup \{n\}}(x)
    \in \Pi(S)\}.
    \end{align}

    Then, we define
    \[
    \Fold_A(S) := \Pi^{-1}[(\Pi(S) \setminus F_{A}) \cup \sigma_{A \cup \{n\}}(F_{A})].
    \]
    Figures~\ref{fig:3_fold} and \ref{fig:3_flat} help to visualize
    this operator.

    First, note that in the case $A = \emptyset$, $F_{\emptyset} = 0$
    since $S$ is compressed. Thus, since $S = \Fill(S) = \Pi^{-1}(\Pi(S))$, $\Fold_A(S) = S$.

    For $A \neq \emptyset$, note that since each element of $x \in \Pi(S)$
    either stays the same or is replace by $y \in \Pi(\Fold_A)$ such
    that $|x| \le |y|$. Thus, since $|\Pi^{-1}(y)| \ge |\Pi^{-1}(x)|$
    for all such $x$ and $y$. we have that $\mu(\Fold_i(S))
    \ge \mu(S)$. Furthermore, by (\ref{eq:mu_mon}), if we know that $\Fold_A(S)$ is
    compressed, then $\mu(\partial \Fold_A(S)) \le \mu(\partial S)$.

    Thus, it suffices to determine when $\Fold_A(S)$ is compressed. We
    claim that this is always the case when $\Fold_B(S) = S$ for all
    $B \subsetneq A$.

    \begin{claim}\label{claim:fold}
      Let $S \subseteq [t]^n$ be compressed and $A \subseteq [n-1]$
      nonempty. If $S = \Fill(S)$ and $\Fold_B(S) = S$ for all $B
      \subsetneq A$, then $\Fold_A(S)$ is compressed and so by the
      above discussion $\mu(\Fold_i(S)) \ge \mu(S)$ and $\mu(\partial
      \Fold_A(S)) \le \mu(\partial S)$.
    \end{claim}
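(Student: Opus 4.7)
The plan is to translate the claim from $[t]^n$ into a statement about the Boolean cube $\{0,1\}^n$ and then exploit the minimality hypothesis on the subsets $B \subsetneq A$. Since $\Fold_A(S) = \Pi^{-1}(T')$ where $T' := (T \setminus F_A) \cup \sigma_{A \cup \{n\}}(F_A)$ with $T := \Pi(S)$, and since for any $U \subseteq \{0,1\}^n$ the preimage $\Pi^{-1}(U) \subseteq [t]^n$ is compressed if and only if $U$ is a down-set under coordinatewise order (a direct consequence of $\Pi$ being order-preserving with comparable fibers for comparable points), the claim reduces to showing that $T'$ is a down-set in $\{0,1\}^n$. The original $T$ is a down-set because $S$ is compressed.

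I would then fix $y \in T'$ and $y' \le y$ in $\{0,1\}^n$ and aim to show $y' \in T'$, splitting into Case~1 ($y \in T \setminus F_A$) and Case~2 ($y \in \sigma_{A \cup \{n\}}(F_A)$). In Case~1, $y' \in T$ is automatic since $T$ is a down-set, so the task is only to show $y' \notin F_A$. The subtle subcase is when $y'_A = 0$ and $y'_n = 1$ while $y_A$ is nonzero; picking any $j \in A$ with $y_j = 1$ but $y'_j = 0$, I set $B := A \setminus \{j\} \subsetneq A$ and invoke the hypothesis $F_B = \emptyset$ on the auxiliary vector $w$ obtained from $y'$ by setting the $j$-th coordinate to $1$. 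A short check gives $w \le y$ (so $w \in T$), $w_B = 0$, and $w_n = 1$, hence $\sigma_{B \cup \{n\}}(w) \in T$; a direct computation identifies $\sigma_{B \cup \{n\}}(w) = \sigma_{A \cup \{n\}}(y')$, witnessing $y' \notin F_A$.

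In Case~2, write $y = \sigma_{A \cup \{n\}}(x)$ with $x \in F_A$, so $y_A = 1$ and $y_n = 0$. If $y'_A = 1_A$, then $\sigma_{A \cup \{n\}}(y') \le x \in T$ so $\sigma_{A \cup \{n\}}(y') \in T$; depending on whether $y' \in T$ (in which case $y'_A \neq 0$ forces $y' \notin F_A$) or $y' \notin T$ (in which case $\sigma_{A \cup \{n\}}(y') \in F_A$, placing $y' \in \sigma_{A \cup \{n\}}(F_A)$), one concludes $y' \in T'$. If instead some $k \in A$ has $y'_k = 0$, I apply $F_{A \setminus \{k\}} = \emptyset$ directly to $x$: from $x_{A \setminus \{k\}} = 0$ and $x_n = 1$ one obtains $\sigma_{(A \setminus \{k\}) \cup \{n\}}(x) \in T$, an element that dominates $y'$ coordinatewise, so $y' \in T$; since $y'_n = 0$, the condition defining $F_A$ automatically fails, giving $y' \in T \setminus F_A \subseteq T'$.

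The main obstacle I anticipate is the subtle subcase of Case~1 (and its mirror in Case~2b): descending along a coordinate in $A$ could, in principle, push $y'$ into the ``to-be-removed'' portion $F_A$, breaking down-closure of $T'$. The minimality hypothesis $\Fold_B(S) = S$ for all $B \subsetneq A$ is tailored for exactly this obstruction: the emptiness of each $F_B$ provides the auxiliary witness $\sigma_{B \cup \{n\}}(\cdot) \in T$ that lifts, under the identification above, to the required $\sigma_{A \cup \{n\}}$-image. This is the only nontrivial input, and the rest of the argument is bookkeeping about which coordinates are flipped.
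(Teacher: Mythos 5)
Your proposal is correct and takes essentially the same route as the paper: reduce to showing $\Pi(\Fold_A(S))$ is a down-set in $\{0,1\}^n$, then in each case use the emptiness of $F_B$ for proper $B\subsetneq A$ (equivalent to $\Fold_B(S)=S$ when $S=\Fill(S)$) to produce a point of $\Pi(S)$ witnessing that the offending $y'$ actually lands in $(\Pi(S)\setminus F_A)\cup\sigma_{A\cup\{n\}}(F_A)$. The differences are cosmetic (direct argument versus contradiction, and choosing $B=A\setminus\{j\}$ for a single flipped coordinate $j$ rather than the paper's $B=\{i\in A: x_i=0\}$); your sketch even spells out the sub-case $y'_A=\mathbf{1}_A$ that the paper's proof passes over, while silently omitting the easy companion sub-case $y_A=0$ inside your Case 1, which is handled directly by $\sigma_{A\cup\{n\}}(y')\le \sigma_{A\cup\{n\}}(y)\in\Pi(S)$.
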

    \begin{proof}
This is equivalent to showing that $\mathbf
    1_{\Pi(\Fold_A(S))} = \mathbf 1_{(\Pi(S) \setminus F_{A}) \cup
      \sigma_{A \cup \{n\}}(F_{A})}$ is monotone. Assume for contradiction that there is $x \in
    \Pi(\Fold_A(S))$ and $y \in \{0, 1\}^n \setminus
    \Pi(\Fold_A(S))$. such that $y \le x$.

    First consider the case $x \in \sigma_{A\cup\{n\}}(F_{A})$.Thus, $x_i
    = 1$ for all $i \in A$ and $x_n = 0$. Since $y \le x$, $y_n =
    0$. Let $z = \sigma_{A\cup\{n\}}(x) \in F_A \subseteq \Pi(S)$. 
    
    If $y_i = 0$ for some $i \in A$. Then, $y \le
    \sigma_{\{i\}}(x).$ Since we assumed $S = \Fold_{A \setminus
      \{i\}}(S)$, we know that $\sigma_{(A\setminus \{i\})\cup
      \{n\}}(z) = \sigma_{\{i\}}(x) \in \Pi(S)$. Thus, since $S$ is
    compressed, $y \in \Pi(S)$. But, $y_n = 0$, so $y \in \Pi(S)
    \setminus F_A \subseteq \Fold_A(S)$, contradiction.
    
    Otherwise, $x \in \Pi(S) \setminus F_{A}.$ Since $S$ is compressed
    and $y \le x$, we have that $y \in \Pi(S)$. Thus, since $y \not\in
    \Pi(\Fold_A(S))$, we have that $y \in F_{A}$. Thus $y_n = 1$, so
    $x_n = 1$. Let $z := \sigma_{A\cup\{n\}}(y) \not\in \Pi(S)$.
    
    Let $B \subseteq A$ be the coordinates $i \in B$ for which $x_i =
    1$. Then, $z' := \sigma_{B \cup \{n\}}(x)$. Since $x \ge y$, it
    can be checked that $z' \ge z$. Since $S$ is compressed and $z
    \not\in \Pi(S)$, we have that $z' \not\in \Pi(S)$. If $B
    \subsetneq A$, then this contradicts the fact that $z' \in
    \Pi(\Fold_B(S)) = \Pi(S)$. If $B = A$, then this contradicts the
    fact that $z' = \sigma_{A\cup\{n\}}(x) \in \Pi(S)$ because  $x \not\in F_A$.
    \end{proof}

    Now that we have defined the operators, we finish the
    proof. First, set $S' = \Flat(S)$. Now, topologically sort the
    subsets of $[n-1]$ by inclusion. For each $A \subseteq [n-1]$, in
    this topological order, apply $\Fold_A$ to $S'$. If it so happens
    that applying $\Fold_A$ causes $\Fold_B(S') \neq S'$ for some $B$
    earlier in the topological order, we go backtrack to the earliest
    such $B$.

    By Claim~\ref{claim:fold}, we know that $S'$ is still
    compressed after each operation. Note that each time $S'$ changes,
    $\mu(S' \cap J)$ strictly increases. Thus, after some finite
    number of applications of these operations, we will have a
    compressed $S'$ such that for all $A\subseteq [n-1]$, $\Fill(S') =
    \Fold_A(S') = S'$, $\mu(S') \ge \mu(S)$, and $\mu(\partial S') \le
    \mu(\partial S)$.

    Furthermore, since $S' = \Fold_{[n-1]}(S')$, we know that either
    $J \subseteq S'$ or $S' \subseteq J$. Now, take any $S'' \subseteq
    S'$ such that $\mu(S'') = \mu(S)$ while preserving the property
    that $J \subseteq S''$ or $S'' \subseteq J$. Since $S'' \subseteq
    S'$, we have that $\mu(\partial S'') \le \mu(\partial S') \le
    \mu(\partial S),$ as desired.
\end{proof}

With this claim proven, we may now prove the theorem.

\begin{proof}[Proof of Theorem~\ref{lem:bound-D-exact}.]  We divide the
proof into four parts.
  \begin{itemize}
  \item \textit{Part 1: If $\nu < \frac{1}{t}$ then $\Phi_t(\nu) \le
\frac{t-1}{t}\Phi_t(t\nu).$}

Consider any $S \subset [t]^n$ such
that $\mu(S) \ge t\nu$. Let
\[S' = S \times [1] \subset [t]^{n+1}\] be the set
where every element of $S$ has a $1$ appended. Note that
\[
\partial S' = (\partial S) \times \{2, \hdots, t\}.
\]
Therefore,
\begin{align*}
\mu(S') &= \frac{\mu(S)}{t} \ge \nu\\
\mu(\partial S') &= \frac{t-1}{t}\mu(\partial S).
\end{align*}
Thus,
\[\frac{t-1}{t}\Phi_t(t\nu) = \inf_{\substack{S\in \mathbb [t]^{*}\\\mu(S) \ge
t\nu}}\frac{t-1}{t}\mu(\partial(S)) \ge \inf_{\substack{S'\in \mathbb [t]^{*}\\\
\mu(S') \ge \nu}}\mu(\partial(S')) = \Phi_t(\nu),
\]
where $[t]^{*} := \bigcup_{n \ge 1} [t]^n.$

\item \textit{Part 2: If $\nu < \frac{1}{t}$ then $\Phi_t(\nu) \ge
\frac{t-1}{t}\Phi_t(t\nu).$}

Consider any $S \subset [t]^n$ such that
$\mu(S) \ge \nu$. If $n = 1$, then $S = \emptyset$, for which it is
trivial that $\Phi_t(0) = 0$. Thus, assume $n \ge 2$.

If $\mu(S) \ge \frac{1}{t}$, then by Theorem~\ref{thm:isoperimetry-tensor}, \[\mu(\partial S) \ge
\frac{t-1}{t} \ge \frac{t-1}{t}\Phi_t(t\nu).\]  Thus, we may assume
$\mu(S) < \frac{1}{t}$. By Claim~\ref{claim:pack1} there is $S' \in
[t]^n$ such that $\mu(S') \ge \nu$, $\mu(\partial S') \le
\mu(\partial S)$ and $S' \subseteq [t]^{n-1} \times [1]$. Let
\[S'' = (S')_1 \times [t] = \{(x_1, \hdots, x_{n-1}, y) \mymid x \in [t]^n, y \in [t]\} \subset [t]^n.\]
Intuitively, $S''$ is $S'$ `stacked' $t$ times. Therefore, $\mu(S'')
\ge t\nu$.Then,
\begin{align*}
 \partial S' &= (\partial S')_1 \times \{2, \hdots, t\}\\
 \partial S'' &= (\partial S')_1 \times \{1, \hdots, t\}.
\end{align*}

Therefore, \[\mu(\partial S) \ge \mu(\partial S') =
\frac{t-1}{t}\mu(\partial S'') \ge \frac{t-1}{t}\Phi_t(t\nu).\] Thus,
    \[ \Phi_t(\nu) = \inf_{\substack{S\in [t]^*\\\mu(S)
        \ge \nu}} \mu(\partial S) \ge  \frac{t-1}{t}\Phi_t(t\nu).
    \]

  \item \textit{Part 3: If $\nu \ge \frac{1}{t}$ then $\Phi_t(\nu) \le
\frac{t-1}{t} + \frac{1}{t}\Phi_t\left(\frac{t\nu-1}{t-1}\right).$}

For any $S \subseteq [t]^n$ such that $\mu(S) \ge \frac{t\nu -
1}{t-1}$, let $S' \subseteq [t]^{n+1}$ be
\[
S' := ([t]^n \times [1]) \cup (S \times \{2, \hdots, t\}).
\]
Then,
\[
\partial S' = ([t]^n \times \{2, \hdots, t\}) \cup (\partial S \times
[1]).
\]
Therefore,
\begin{align*}
\mu(S') &= \frac{1}{t} + \frac{t-1}{t}\mu(S)\\
\mu(\partial S') &= \frac{t-1}{t} + \frac{1}{t}\mu(\partial S).
\end{align*}
Hence,
\begin{align*} \frac{t-1}{t} + \frac{1}{t}\Phi_t\left(\frac{t\nu -
      1}{t-1}\right) = \inf_{\substack{S\in \mathbb [t]^*\\\mu(S)\ge
      \frac{t\nu-1}{t-1}}} \left(\frac{t-1}{t} +
    \frac{1}{t}\mu(\partial S)\right)\ge \inf_{\substack{S'\in
      \mathbb [t]^*\\\mu(S')\ge \nu}} \mu(\partial S') = \Phi_t(\nu).
\end{align*}

  \item \textit{Part 4: If $\nu \ge \frac{1}{t}$ then $\Phi_t(\nu) \ge
\frac{t-1}{t} + \frac{1}{t}\Phi_t\left(\frac{t\nu-1}{t-1}\right).$}

For any $S \subseteq [t]^n$ such that $\mu(S) \ge \nu \ge \frac{1}{t}$, by
Claim~\ref{claim:pack1} there is $S' \in [t]^n$ such that
$\mu(S) = \mu(S')$, $\mu(\partial S') \le \mu(\partial S)$, and
$[t]^{n-1} \times [1]\subseteq S'$. Pick $j \in \{2, \hdots, t\}$ such
that $\mu(S'_j)$ is maximal.\footnote{Note that we did not define
  $S_j$ in (\ref{eq:S_i}) for $n=1$. In that case, define $S'_j$ to be
  $\emptyset$ if $j \not\in S'$ and $\{()\}$ if $j \in S'$. It is
  consistent to define $\mu(\emptyset) = 0$ and $\mu(\{()\}) = 1$.} Then
\[
\mu(S'_j) \ge \frac{1}{t-1} \sum_{j=2}^{t}\mu(S'_j) =
\frac{1}{t-1}(t\mu(S') - \mu(S'_1)) = \frac{t\mu(S') - 1}{t-1}\mu(S')\ge \frac{t\nu - 1}{t-1}.
\]
and also
\[
\partial S' \subseteq ([t]^{n-1} \times \{2, \hdots, t\}) \cup
(\partial S'_j \times [1]).
\]
Thus,
\[
\mu(\partial S) \ge \mu(\partial S') \ge \frac{t-1}{t} + \frac{1}{t}\mu(\partial S'_j) \ge
\frac{t-1}{t} + \frac{1}{t}\Phi_t\left(\frac{t\nu - 1}{t-1}\right).
\]

Therefore,
    \[ \Phi_t(\nu) = \inf_{\substack{S\in \mathbb [t]^*\\\mu(S)\ge \nu}}
\mu(\partial S) \ge \frac{t-1}{t} +
\frac{1}{t}\Phi_t\left(\frac{t\nu - 1}{t-1}\right).\]
  \end{itemize}
\end{proof}

\section{Optimality of exponent in Theorem~\ref{thm:tensor-stability}}\label{app:opt}

In this appendix, we show in (\ref{eq:stab-tensor-clique}) of
Theorem~\ref{thm:tensor-stability} that the exponent $\eta(t) =
\frac{\log t}{\log t - \log(t-1)}$ is
optimal and that the constant factor of $2$ is nearly optimal. In
other words, the stability result is optimal up to a constant factor.

\begin{lem} For all $t \ge 3$, there exists an infinite sequence of
  independent sets $\{I_n\}_{n \ge 3}$ such that $I_n \subset \mathbb
  [t]^n$, $\epsilon_n = 1 - t\mu(I_n) > 0$ tends to $0$ as $n \to
  \infty$, and for any $n$ and any maximum-sized independent set $J_n$
  of $K_t^n$, \[
  \mu(I_n \setminus J_n) >
  \frac{t-1}{t}\epsilon^{\eta(t)}.\]
\end{lem}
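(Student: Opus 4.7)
The plan is to construct $I_n$ as a controlled perturbation of the canonical maximum independent set $J := [t]^{n-1} \times \{1\}$, based on the equality case of Claim~\ref{claim:tensor-poor-expansion}. Fix an integer $k = k_n$ with $2 \le k_n \le n-1$ and $k_n \to \infty$ (e.g.\ $k_n = n-1$), and consider
\[
B_0 := \{1\}^k \times [t]^{n-1-k} \subset [t]^{n-1},
\]
whose vertex boundary in $K_t^{n-1}$ is $\{2,\ldots,t\}^k \times [t]^{n-1-k}$, giving the extremal values $|B_0|/t^{n-1} = t^{-k}$ and $|\partial B_0|/t^{n-1} = ((t-1)/t)^k$. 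I then lift to $[t]^n$ by setting
\[
B := B_0 \times \{2,\ldots,t\},
\qquad
A := (\partial B_0) \times \{1\} \subset J,
\qquad
I_n := (J \setminus A) \cup B.
\]

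First, I would check that $I_n$ is an independent set: any two elements of $B$ agree in coordinate $1$ (and hence are not adjacent in $K_t^n$); two elements of $J \setminus A \subset J$ agree in the last coordinate; and if $(b,j) \in B$ and $(y,1) \in J \setminus A$ were adjacent, then $y_i \ne b_i$ for all $i \le n-1$, forcing $y \in \partial B_0$ and hence $(y,1) \in A$, a contradiction. A direct count then yields
\[
\mu(B) = \frac{t-1}{t^{k+1}},
\qquad
\mu(A) = \frac{1}{t}\left(\frac{t-1}{t}\right)^{\!k},
\qquad
\epsilon_n := 1 - t\mu(I_n) = \left(\frac{t-1}{t}\right)^{\!k} - \frac{t-1}{t^k},
\]
which is positive for $k \ge 2$ (since $(t-1)^{k-1} > 1$) and tends to $0$ as $k \to \infty$.

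Next I would identify the maximum-sized independent set closest to $I_n$. Since $I_n$ is built near $J$, one expects the minimum of $\mu(I_n \setminus J_n)$ over all $J_n$ to be attained at $J_n = J$, where $I_n \setminus J = B$, giving $\mu(I_n \setminus J) = (t-1)/t^{k+1}$. For any other max indep set $J_{i,m}$, a brief case analysis on $(i,m)$ (splitting according to whether $i = 1$ or $i \ne 1$, and whether $m = n$, $m \le k$, or $k < m < n$) bounds $\mu(I_n \cap J_{i,m})$ above by $t^{-2} + O(t^{-(k+1)})$, so $\mu(I_n \setminus J_{i,m}) \ge (t-1)/t^2 - O(t^{-(k+1)}) - \epsilon_n/t$, which comfortably exceeds $(t-1)/t^{k+1}$ whenever $k \ge 2$.

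Finally, to verify the desired strict inequality, I would use the key identity
\[
\left(\frac{t-1}{t}\right)^{\!\eta(t)} = \frac{1}{t},
\]
which follows directly from $\eta(t) = \log t/(\log t - \log(t-1))$. This identity converts
\[
\mu(I_n \setminus J) = \frac{t-1}{t^{k+1}} > \frac{t-1}{t}\,\epsilon_n^{\eta(t)}
\]
into $((t-1)/t)^k > \epsilon_n$, which is immediate since $\epsilon_n = ((t-1)/t)^k - (t-1)/t^k$ and $(t-1)/t^k > 0$. The main obstacle is the (essentially routine but slightly tedious) case analysis showing that $J = J_{1,n}$ is the closest max indep set to $I_n$; the real content is that the construction uses precisely the equality case of the isoperimetric inequality, so that the bound from Lemma~\ref{lem:click} cannot be improved in its exponent and is tight up to a constant factor.
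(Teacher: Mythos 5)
Your construction, with $k_n = n-1$, is the same as the paper's up to a permutation of coordinates (the paper fixes the first coordinate for $J$, you fix the last), and the algebraic finish via $\bigl(\tfrac{t-1}{t}\bigr)^{\eta(t)} = \tfrac{1}{t}$ is the same identity the paper uses to reduce to $\bigl(\tfrac{t-1}{t}\bigr)^k > \epsilon_n$. Both proofs also defer, in the same informal way, the routine check that the canonical $J$ is the maximum independent set minimizing $\mu(I_n \setminus J_n)$, so your proposal matches the paper's argument.
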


\begin{proof} For $n \ge 3$, consider $J_n = [1] \times [t]^{n-1}$ and
\begin{align}
I_n := (([t] \times [1]^{n-1}) \cup J_n) \setminus ([1] \times \{2, \hdots, t-1\}^n)
\end{align}
See Figure~\ref{fig:4} for a visualization.

\begin{figure}
\begin{center}
\includegraphics[width=4in]{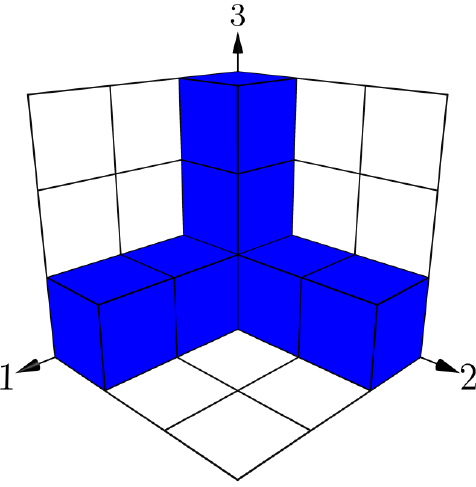}
\end{center}
\caption{Schematic of $I_3$ when $t = 3$. See the caption for Figure~\ref{fig:3_fill}
  on interpreting this visualization.}
\label{fig:4}
\end{figure}

One may check that $I_n$ is an independent set of $K_t^n$ and $J_n$ is
a maximum-sized independent set which minimizes $\mu(I_n \setminus
J_n)$. Furthermore,
\[\mu(I_n) = \frac{t-1}{t^{n}} + \frac{1}{t} - \frac{(t-1)^{n-1}}{t^{n}}.\]
Thus,
\begin{align}
\epsilon_n &= \frac{(t-1)^{n-1} - (t-1)}{t^{n-1}}\\
\delta_n &:= \mu(I_n\setminus J_n) = \frac{t-1}{t^n}.
\end{align}

Notice that since $t^{1/\eta(t)} = \frac{t-1}{t}.$
  \begin{align*}\delta_n^{1/\eta(t)} &=
  \frac{(t-1)^{1/\eta(t)}}{t^{n/\eta(t)}}\\
  &=
  \left(\frac{t-1}{t}\right)^{1/\eta(t)}\left(\frac{t-1}{t}\right)^{n-1}\\
  &= \left(\frac{t-1}{t}\right)^{1/\eta(t)}(\epsilon_n + t\delta_n)\\
  &> \left(\frac{t-1}{t}\right)^{1/\eta(t)}\epsilon_n.
\end{align*}
Therefore, raising both sides to the $\eta(t)$ power,
\[\delta_n > \frac{t-1}{t}\epsilon_n^{\eta(t)},\] as
  desired.
\end{proof}

\end{document}